\pgfplotsset{compat=1.9}
\theoremstyle{break}
\newtheorem{thm}{Theorem}[section]
\newtheorem{lem}[thm]{Lemma}%
\newtheorem{prop}[thm]{Proposition}
\newtheorem{cor}[thm]{Corollary}
\newtheorem{defn}[thm]{Definition}
\newtheorem{rmk}[thm]{Remark} 
\DeclareMathOperator{\Id}{id}
\DeclareMathOperator{\dvol}{dvol}
\DeclareMathOperator{\di}{d}
\DeclareMathOperator{\supp}{supp}
\DeclareMathOperator{\op}{op}
\DeclareMathOperator{\ff}{ff}
\DeclareMathOperator{\fd}{fd}
\DeclareMathOperator{\lf}{lf}
\DeclareMathOperator{\rf}{rf}
\DeclareMathOperator{\td}{td}
\DeclareMathOperator{\vol}{vol}
\DeclareMathOperator{\diff}{Diff}
\DeclareMathOperator{\diam}{diam}
\DeclareMathOperator{\pt}{pt}
\numberwithin{equation}{section}
\definecolor{qqwuqq}{rgb}{0,0,0}
\begin{document}

\title[Parametrix construction]{Heat-type Equations on manifolds \\ with fibered boundaries II: \\ Parametrix construction}

\author{Bruno Caldeira}
\address{Universidade Federal de S\~{a}o Carlos, Brazil}
\email{brunoccarlotti@gmail.com}
\email{brunocarlotti@estudante.ufscar.br}

\author{Giuseppe Gentile}
\address{Leibniz Universit\"{a}t Hannover, Germany}
\email{giuseppe.gentile@math.uni-hannover.de}

\subjclass[2020]{58J35; 35K05; 35K59}

\maketitle
\begin{abstract} 
This is the second part of a two parts work on the analysis of heat-type equations on manifolds with fibered boundary equipped with a $\Phi$-metric.
This setting generalizes the asymptotically conical (scattering) spaces and includes special cases of magnetic and gravitational
monopoles.
The core of this second part consists on the construction of parametrix for heat-type equations.
Consequently we use the constructed parametrix to infer results regarding existence and regularity of certain homogeneous and non homogeneous second order linear parabolic equations with non constant coefficients.
This work represents the first step towards the analysis of geometric flows such as Ricci-, Yamabe and Mean Curvature flow on some families of non compact manifolds. 
\end{abstract}

\tableofcontents

\section{Introduction and statement of the main results}\label{Introduction}

In the first part (\cite{paper1}) of this two parts work the authors presented mapping properties for the heat-kernel operator $\mathbf{H}$ and derived existence and uniqueness of the heat equation on a $\Phi$-manifold.
The aim of the present work is to extend the analysis carried over in \cite{paper1} to a slightly more general family of equations.
Namely we consider some linear parabolic equations with variable coefficients on $\Phi$-manifolds which we refer to as heat-type equations.

\medskip
Manifolds with fibered boundary are a class of compact manifold $\overline{M}$ whose boundary $\partial \overline{M}$ is the total space of a fibration $\phi:\partial \overline{M}\rightarrow Y$ over a closed (i.e. compact without boundary) Riemannian manifold $Y$. 
Moreover, the fibers of the fibration $\phi$ are copies of a fixed closed Riemannian manifold $Z$.
An open manifold $M$, which is the interior of a manifold fibered boundary $\overline{M}$, is a $\Phi$-manifold if it is equipped with a specific Riemannian metric known as $\Phi$-metric.
Such a metric is such that, near the boundary $\partial\overline{M}$, has asymptotic behavior described by 
\begin{equation}\label{FirstEquation}
g_{\Phi} = \dfrac{\di x^{2}}{x^{4}} + \dfrac{\phi^{*}g_{Y}}{x^{2}} + g_{Z} + h,
\end{equation}
where $h$ is the collection of cross-terms and it contains extra powers of $x$ in each of its terms.  
In the above, $g_{Y}$ is a Riemannian metric on the base $Y$, while $g_{Z}$ is a symmetric bilinear form on $\partial \overline{M}$ which restricts to a Riemannian metric at each fiber. 
\medskip

The simplest example of a $\Phi$-manifold is $\mathbb{R}^m$ equipped with the Euclidean metric expressed in polar coordinates
$$g=\di r^2+r^2\di \theta.$$
In fact, to obtain an expression as the one in \eqref{FirstEquation} from the above, one could simply perform a change of coordinates $x=r^{-1}$ far from the origin. 
In this case, note that $Y = \mathbb{S}^{m-1}$ and $Z=\{\pt\} $.  
Other example of $\Phi$-manifolds include several complete Ricci-flat metrics, products of locally Euclidean spaces with a compact manifold and some classes of gravitational instantons.
\medskip

Despite the fact that $\Phi$-manifold have been firstly introduced in 1990's, they remain relatively new in the field of Geometric Analysis and, in particular, in the analysis of geometric flows such as Yamabe-,Ricci- and the Mean Curvature flow, among others.
This paper can be thought as a preparation for the analysis of the above mentioned flows. 
Indeed we prove short-time existence for Cauchy problems of the form
\begin{equation}\label{baseq}
(\partial_{t} + a\Delta)u = \ell, \;\;u|_{t=0}=u_0,
\end{equation}
for some suitable functions $\ell$ and $a$ and $u_0$.
It is well known that (most) geometric flows give rise to quasilinear parabolic PDE's but the arguments treated here can be tweaked a bit (e.g. by linearizing the quasi-linear equation) to guarantee short-time existence for such geometric flows, as it has been done by the first named author for the Yamabe flow in \cite{bruno} and by the second named author for the mean curvature flow in \cite{giu}.

\subsection{Main results and structure of the paper}

Our aim is to extend the results in \cite{paper1} to Cauchy problems of the form \eqref{baseq}.
This is achieved by making use of the mapping properties proved by the authors in \cite{paper1}.
Therefore in \S \ref{review-sec} we give an overview on $\Phi$-manifolds and their properties.
Moreover we recall the definition of the "geometry adapted" H\"{o}lder spaces and the mapping properties of the heat-kernel between these H\"{o}lder spaces.
\S \ref{maxsection} is devoted to the discussion of a parabolic maximum principle, based on the Omori-Yau maximum principle for stochastically complete manifolds.
\begin{thm} \label{max-princ-thm}
Let $(M,g)$ be a stochastically complete manifold and let $a$ be a function on $M$ which is bounded and bounded from below away from zero.  If $u \in C^{2,\alpha}(M\times [0,T])$ is a solution of the Cauchy problem
\begin{equation}
    (\partial_{t} + a\Delta)u = 0;\;\; u|_{t=0} = 0,
\end{equation}
then $u = 0$.
\end{thm}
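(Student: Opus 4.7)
The plan is to derive the uniqueness statement from a parabolic extension of the weak maximum principle at infinity, which is available on stochastically complete manifolds. By linearity of $(\partial_t + a\Delta)$, it suffices to prove $u \leq 0$; applying the same argument to $-u$, which solves the same equation with the same initial data, then yields $u \geq 0$.

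Fix $\epsilon > 0$ and set $v(x,t) := u(x,t) - \epsilon t$, so that
\[
(\partial_t + a\Delta) v \;=\; -\epsilon, \qquad v(\cdot, 0) = 0.
\]
Because $u$ lies in the geometry-adapted $C^{2,\alpha}$-space and $[0,T]$ is compact, $v$ is bounded, so $v^{\ast} := \sup_{M \times [0,T]} v < \infty$. Suppose, toward contradiction, that $v^{\ast} > 0$. Continuity in $t$ together with $v(\cdot, 0) \equiv 0$ forces any near-maximising sequence to be bounded away from $t = 0$, so times can be chosen $t_k > 0$.

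Next I would invoke the parabolic form of the Omori--Yau principle, which is the central content of \S\ref{maxsection}: stochastic completeness of $(M, g)$ produces a sequence $(x_k, t_k) \in M \times (0, T]$ satisfying
\[
v(x_k, t_k) \longrightarrow v^{\ast}, \qquad \partial_t v(x_k, t_k) \geq -\tfrac{1}{k}, \qquad \Delta v(x_k, t_k) \geq -\tfrac{1}{k},
\]
in the geometer's sign convention where $\Delta$ is non-negative at a maximum. Since $a$ is bounded,
\[
-\epsilon \;=\; (\partial_t + a\Delta) v(x_k, t_k) \;\geq\; -\frac{1 + \|a\|_{\infty}}{k} \;\xrightarrow[k \to \infty]{}\; 0,
\]
a contradiction. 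Hence $v^{\ast} \leq 0$, so $u(x,t) \leq \epsilon t$ throughout $M \times [0,T]$; letting $\epsilon \downarrow 0$ gives $u \leq 0$.

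The main obstacle is the parabolic extension of the Omori--Yau principle to a form sharp enough to simultaneously control $\partial_t v$ and $\Delta v$ along a near-maximising sequence. The classical elliptic statement only produces such a sequence for time-independent functions on $M$, so one must either derive the parabolic version directly (for instance, by penalising $v$ with a term built from an exhaustion function on $M$ and a suitable cut-off in $t$, then applying the elliptic principle to the penalised function at each time slice), or exploit the probabilistic viewpoint that the diffusion generated by $a\Delta$ is a bounded time-change of the $g$-Brownian motion, hence non-explosive exactly when the latter is. The boundedness of $a$ both above and away from zero is used precisely to make this comparison uniform and to keep the equation uniformly parabolic in the limit $k \to \infty$.
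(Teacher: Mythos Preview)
Your strategy (penalize by $-\epsilon t$ and contradict the equation at a near-maximal space-time point) is different from the paper's, and the step you yourself flag as ``the main obstacle'' is a genuine gap, not a technicality. The parabolic Omori--Yau statement you invoke---a sequence $(x_k,t_k)$ along which \emph{simultaneously} $v(x_k,t_k)\to v^\ast$, $\partial_t v(x_k,t_k)\ge -1/k$, and $\Delta v(x_k,t_k)\ge -1/k$---does not follow from the elliptic version available on stochastically complete manifolds. Applying the elliptic principle to $v(\cdot,t)$ at a time $t$ where $\sup_x v(x,t)$ is nearly $v^\ast$ yields control on $\Delta v$ but says nothing about $\partial_t v$ at the resulting points; conversely, near-maximality in $t$ of $t\mapsto v(x,t)$ at some $x$ does not survive passage to the spatial Omori--Yau sequence. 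Your two suggested fixes are only sketches: the exhaustion/penalization route requires exactly the kind of careful argument you have not supplied, and the time-change argument, while correct in spirit for time-independent $a$, is a separate theorem you have not proved here.

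The paper closes this gap by a different mechanism. It proves an enveloping result (Proposition~\ref{envelope}): the function $u_{\sup}(t):=\sup_M u(\cdot,t)$ is locally Lipschitz, and at its differentiability points
\[
\tfrac{d}{dt}u_{\sup}(t)\;\le\;\lim_{\epsilon\to 0^+}\limsup_{k\to\infty}\partial_t u\bigl(p_k(t+\epsilon),t+\epsilon\bigr),
\]
where $(p_k(s))_k$ is the \emph{elliptic} Omori--Yau maximizing sequence for $u(\cdot,s)$. The proof uses the mean value theorem together with the H\"older continuity of $\partial_t u$ coming from $u\in C^{2,\alpha}$; this regularity is precisely what lets one transfer information from the spatial near-max to the time derivative. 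One then substitutes $\partial_t u=-a\Delta u$ and the elliptic bound $-\Delta u(p_k)<1/k$ to get $\tfrac{d}{dt}u_{\sup}\le 0$ a.e.; with $u_{\sup}(0)=0$ this forces $u\le 0$, and symmetrically $u\ge 0$. So the paper never needs a space-time Omori--Yau: it decouples the two directions via the envelope, which is the idea your argument is missing.
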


Based on \cite{bahuaud2019long} we employ the maximum principle in \ref{max-princ-thm} to construct a parametrix for heat-type operators in \S \ref{ParametrixSection}.
In particular we prove:
\begin{thm} \label{theorem2}
	Consider a function $a \in C^{\beta}_{\Phi}(M\times [0,T])$ positive and bounded away from zero.
Then for any $\alpha < \beta$ and for any $\gamma\in\mathbb{R}$ there exist two bounded operators 
	\begin{align*}
	&\mathbf{Q}:x^{\gamma}C^{\alpha}_{\Phi}(M\times [0,T])\rightarrow x^{\gamma}C^{2,\alpha}_{\Phi}(M\times [0,T]), \\
	&\mathbf{E}:x^{\gamma}C^{\alpha}_{\Phi}(M)\rightarrow x^{\gamma}C^{2,\alpha}_{\Phi}(M\times [0,T]),
	\end{align*}
so that the homogeneous and inhomogeneous Cauchy problems
	\begin{align}
	&(\partial_{t} + a\Delta)u = \ell; \;\; u|_{t=0} = 0, \\
	&(\partial_{t} + a\Delta)u = 0; \;\; u|_{t=0} = u_0
	\end{align}
	have solutions $\mathbf{Q}\ell$ and $\mathbf{E}u_{0}$ respectively.
\end{thm}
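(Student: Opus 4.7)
The plan is to construct $\mathbf{Q}$ and $\mathbf{E}$ by a Levi-type parametrix iteration whose building block is the heat kernel $\mathbf{H}$ of $\partial_t+\Delta$ recalled in Section \ref{review-sec}. The variable coefficient $a$ is treated as a perturbation from a (pointwise) constant, the resulting error operator is controlled in weighted H\"older norm and eliminated by a Neumann series, and uniqueness of the constructed solutions is enforced by Theorem \ref{max-princ-thm}.

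For the inhomogeneous problem I would first define an initial parametrix via a frozen-coefficient Duhamel formula,
\[
\mathbf{Q}_{0}\ell(p,t) \;=\; \int_{0}^{t}\!\int_{M} \mathbf{H}\bigl(p,q,a(q,s)(t-s)\bigr)\,\ell(q,s)\,dq\,ds,
\]
in which $a$ is frozen at the source point so that the leading parabolic symbol matches that of $\partial_t+a\Delta$ along the diagonal. The mapping properties from \cite{paper1} recalled in Section \ref{review-sec} imply that $\mathbf{Q}_0:x^{\gamma}C^{\alpha}_{\Phi}(M\times[0,T])\to x^{\gamma}C^{2,\alpha}_{\Phi}(M\times[0,T])$ is bounded, and a direct computation yields $(\partial_t+a\Delta)\mathbf{Q}_{0}\ell=\ell+\mathbf{R}\ell$, where the Schwartz kernel of the error operator $\mathbf{R}$ is built from H\"older differences $a(p,t)-a(q,s)$ multiplied by second derivatives of $\mathbf{H}$.

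The crux is then the norm estimate: on a sufficiently short time interval $[0,T_0]$ I would show that $\mathbf{R}$ is a bounded endomorphism of $x^{\gamma}C^{\alpha}_{\Phi}$ of operator norm strictly less than one. The hypothesis $a\in C^{\beta}_{\Phi}$ with $\alpha<\beta$ is precisely what makes this possible: a H\"older difference $a(p,t)-a(q,s)$ absorbs a factor comparable to $d_{\Phi}(p,q)^{\beta}+(t-s)^{\beta/2}$, which combines with the parabolic scaling of the second derivatives of $\mathbf{H}$ to produce a singularity integrable in $s$ and gaining a positive power of $T_0$. Once $\|\mathbf{R}\|<1$ the Neumann series $(\mathrm{Id}+\mathbf{R})^{-1}=\sum_{k\ge 0}(-\mathbf{R})^{k}$ converges in operator norm and we set $\mathbf{Q}=\mathbf{Q}_{0}(\mathrm{Id}+\mathbf{R})^{-1}$; iterating on consecutive short intervals and using the homogeneous operator to restart with the previous endpoint as new initial data extends the construction to all of $[0,T]$.

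For the homogeneous problem I would set $\mathbf{E}_{0}u_0(p,t)=(\mathbf{H}(t)u_{0})(p)$, which solves $(\partial_t+\Delta)u=0$ with $u|_{t=0}=u_{0}$. Since $(\partial_t+a\Delta)\mathbf{E}_{0}u_{0}=(a-1)\Delta\mathbf{E}_{0}u_{0}=:f_{u_{0}}$ has vanishing initial trace, I then correct by $\mathbf{E}u_{0}=\mathbf{E}_{0}u_{0}-\mathbf{Q}f_{u_{0}}$; the mapping properties of $\mathbf{H}$ together with the boundedness of $\mathbf{Q}$ produce the stated bound $\mathbf{E}:x^{\gamma}C^{\alpha}_{\Phi}(M)\to x^{\gamma}C^{2,\alpha}_{\Phi}(M\times[0,T])$. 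Uniqueness of $\mathbf{Q}\ell$ and $\mathbf{E}u_{0}$ follows from Theorem \ref{max-princ-thm} applied to the difference of any two candidates, which is a $C^{2,\alpha}_{\Phi}$-solution of the homogeneous Cauchy problem with zero initial data. I expect the main technical obstacle to be the uniform norm bound on $\mathbf{R}$ in the \emph{weighted} space $x^{\gamma}C^{\alpha}_{\Phi}$: one must exploit the full $\Phi$-parabolic blow-up structure underlying the construction of $\mathbf{H}$ in \cite{paper1} to control the error uniformly up to the fibered boundary, where the classical interior parabolic estimates are not directly available.
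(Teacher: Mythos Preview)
Your outline follows the classical Levi method---freeze $a$ at the running source point $(q,s)$ and correct by a Neumann series---whereas the paper localizes: it builds a boundary parametrix $\mathcal{Q}_B$ by freezing $a$ at finitely many boundary centers $(\overline{p},0)$ via an $\varepsilon$-scaled partition of unity (Proposition~\ref{PartitionOfUnityProp}, Lemma~\ref{LocalSolvabilityBoundaryLemma}), glues this to an interior parametrix $\mathcal{Q}_I$ coming from classical parabolic theory on the doubled compact manifold $\widehat{M}$, and only then inverts $\mathrm{Id}+R$. The advantage of the paper's route is that on each patch the frozen operator is literally $\partial_t+c\,\Delta$ for a constant $c=a(\overline{p},0)$, so Theorem~\ref{heatkernelmap} applies verbatim after a fixed time rescaling. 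Your $\mathbf{Q}_0$, by contrast, has kernel $H\bigl(a(q,s)(t-s),p,q\bigr)$ with the rescaling depending on the integration variables, and the mapping properties of \cite{paper1} do not literally cover this object; you would have to reopen the heat-space estimates to justify the claimed boundedness of $\mathbf{Q}_0$. Both approaches share the endgame you describe: Neumann series on a short interval, then iteration using uniqueness from Theorem~\ref{max-princ-thm} to reach all of $[0,T]$.

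There is a more serious issue in your smallness claim for $\mathbf{R}$. Gaining a positive power of $T_0$ in the \emph{sup} norm from the factor $d_\Phi(p,q)^\beta+(t-s)^{\beta/2}$ is the standard Levi mechanism, but you need smallness of $\mathbf{R}$ as an endomorphism of $C^\alpha_\Phi$, and the H\"older seminorm of $\mathbf{R}\ell$ does not obviously pick up a power of $T_0$ from that mechanism alone. The paper deals with precisely this via the extra localization scale: the error on each patch satisfies $\|R^1\ell\|_\alpha\le C\bigl(T^{\alpha/2}+\varepsilon^{\beta}\bigr)\varepsilon^{-\alpha}\|\ell\|_\alpha$, and the hypothesis $\alpha<\beta$ is used to make $\varepsilon^{\beta-\alpha}$ small \emph{before} choosing $T$, so that the $\varepsilon^{-\alpha}$ produced by the partition is absorbed (Proposition~\ref{BoundariParametrixProp}). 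Without a spatial localization parameter your argument lacks this second source of smallness for the H\"older part. Finally, your $\mathbf{E}_0u_0=e^{-t\Delta}u_0$ invokes the heat semigroup rather than the Duhamel operator of Theorem~\ref{heatkernelmap}, so the asserted map $x^\gamma C^\alpha_\Phi\to x^\gamma C^{2,\alpha}_\Phi$ needs a separate justification; the paper instead sets $\mathbf{E}u_0=u_0-\mathbf{Q}(a\Delta u_0)$ (Corollary~\ref{Emap}), which avoids any new semigroup estimate at the cost of requiring $u_0\in x^\gamma C^{2,\alpha}_\Phi(M)$.
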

Finally, in \S \ref{ShortTimeExistenceSec} we generalize the short-time existence and regularity result previously obtained by the authors in \cite{paper1}.
In particular we prove short-time existence and regularity of solutions to a class of linear parabolic equation with variable coefficients.
\begin{cor} \label{theorem4}
Let $\alpha,\beta\in (0,1)$ with $\alpha<\beta$.
Consider the Cauchy problem 
\begin{equation} \label{shorttime}
(\partial_{t} + a\Delta)u = F(u), \;\;
u|_{t=0} = 0,
\end{equation}
with coefficient $a \in C^{\beta}_{\Phi}(M\times [0,T])$ positive and bounded from below away from zero.
Furthermore, assume the map
$F:x^{\gamma}C^{2,\alpha}_{\Phi}(M\times [0,T])\rightarrow C^{\alpha}_{\Phi}(M\times [0,T])$ to satisfy the following conditions: one can write $F = F_{1} + F_{2}$, with 
	\begin{enumerate}
	    \item $F_{1}:x^{\gamma}C^{2,\alpha}_{\Phi}\rightarrow x^{\gamma}C^{1,\alpha}_{\Phi}(M\times [0,T]),$
	    \item $F_{2}:x^{\gamma}C^{2,\alpha}_{\Phi}\rightarrow x^{\gamma}C^{\alpha}_{\Phi}(M\times [0,T])$
	\end{enumerate}
	and, for $u,u' \in x^{\gamma}C^{2,\alpha}_{\Phi}(M\times [0,T])$ satisfying $\|u\|_{2,\alpha,\gamma},\|u'\|_{2,\alpha,\gamma} \le \mu$, exists some $C_{\mu}>0$ such that
	
	\begin{enumerate}
		\item $\|F_{1}(u) - F_{1}(u')\|_{1,\alpha,\gamma} \le C_{\mu}\|u-u'\|_{2,\alpha,\gamma}$, $\|F_{1}(u)\|_{1,\alpha,\gamma} \le C_{\mu},$
		\item $\|F_{2}(u) - F_{2}(u')\|_{\alpha,\gamma} \le C_{\mu}\max\{\|u\|_{2,\alpha,\gamma},\|u'\|_{2,\alpha,\gamma}\}\|u-u'\|_{2,\alpha,\gamma}$, \newline $\|F_{2}(u)\|_{\alpha,\gamma} \le C_{\mu}\|u\|^{2}_{2,\alpha,\gamma}.$
	\end{enumerate}
	Then there exists a unique $u^{*} \in x^{\gamma}C^{2,\alpha}_{\Phi}(M\times [0,T'])$ solution for (\ref{shorttime}) for some $T'>0$ sufficiently small.
\end{cor}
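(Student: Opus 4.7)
The plan is to reformulate the Cauchy problem \eqref{shorttime} as the fixed-point equation $u=\mathbf{Q}F(u)$, where $\mathbf{Q}$ is the parametrix furnished by Theorem~\ref{theorem2}, and then apply Banach's contraction principle on a suitably chosen small closed ball in $x^{\gamma}C^{2,\alpha}_{\Phi}(M\times[0,T'])$ for $T'>0$ small. Any fixed point $u^{*}$ of the nonlinear map $\Psi(u):=\mathbf{Q}F(u)$ automatically satisfies $(\partial_{t}+a\Delta)u^{*}=F(u^{*})$ with vanishing initial trace, because $\mathbf{Q}$ produces precisely this datum. Uniqueness beyond the contraction neighbourhood is recovered by applying Theorem~\ref{max-princ-thm} to the difference of two putative solutions of the associated linear Cauchy problem with coefficient $a$.

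First I would fix $\mu>0$ and consider the closed set
\begin{equation*}
B_{\mu}:=\bigl\{u\in x^{\gamma}C^{2,\alpha}_{\Phi}(M\times[0,T']):\ u|_{t=0}=0,\ \|u\|_{2,\alpha,\gamma}\le\mu\bigr\}.
\end{equation*}
By the boundedness of $\mathbf{Q}$ from Theorem~\ref{theorem2} combined with the splitting $F=F_{1}+F_{2}$,
\begin{equation*}
\|\Psi(u)\|_{2,\alpha,\gamma}\le C_{\mathbf{Q}}\bigl(\|F_{1}(u)\|_{\alpha,\gamma}+\|F_{2}(u)\|_{\alpha,\gamma}\bigr).
\end{equation*}
Hypothesis (ii) controls the second summand by $C_{\mu}\mu^{2}$, which is dominated by $\mu/2$ once $\mu$ is small. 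For the first summand I would exploit that $F_{1}(u)$ lies in the strictly smaller space $x^{\gamma}C^{1,\alpha}_{\Phi}$: since $u$ vanishes at $t=0$, a parabolic interpolation downgrades the $C^{1,\alpha}$-norm to a $C^{\alpha}$-bound with gain $(T')^{\delta}$ for some $\delta>0$, so that hypothesis (i) yields $\|F_{1}(u)\|_{\alpha,\gamma}\le(T')^{\delta}C_{\mu}$. Taking $T'$ small enough then forces $\Psi(B_{\mu})\subset B_{\mu}$.

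Next I would verify the contraction. Running the same recipe on the Lipschitz bounds in (i) and (ii), for $u,u'\in B_{\mu}$ one obtains an estimate of the form
\begin{equation*}
\|\Psi(u)-\Psi(u')\|_{2,\alpha,\gamma}\le C\bigl((T')^{\delta}C_{\mu}+C_{\mu}\mu\bigr)\|u-u'\|_{2,\alpha,\gamma},
\end{equation*}
and shrinking $\mu$ first and then $T'$ makes the prefactor strictly less than one. Banach's fixed-point theorem then delivers a unique $u^{*}\in B_{\mu}$ solving \eqref{shorttime}, and global-in-$B_\mu$ uniqueness is promoted to uniqueness in $x^{\gamma}C^{2,\alpha}_{\Phi}(M\times[0,T'])$ via Theorem~\ref{max-princ-thm} applied to the difference of two solutions, using that $a$ is bounded away from zero and that $M$ is stochastically complete in the setting inherited from \cite{paper1}.

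The main obstacle is precisely the extraction of the $(T')^{\delta}$-gain for the $F_{1}$-term, because the parametrix estimate of Theorem~\ref{theorem2} is $T'$-uniform and yields no smallness on its own. The splitting $F=F_{1}+F_{2}$ in the hypotheses is tailored exactly so that one degree of spare Hölder regularity is available for the non-quadratic part $F_{1}$, and that extra half-derivative can be traded for a positive power of $T'$ via parabolic interpolation on functions with zero initial value, once such an interpolation is established in the $\Phi$-adapted Hölder spaces from \cite{paper1}. With this interpolation in hand, the rest of the argument is a routine Banach fixed-point run.
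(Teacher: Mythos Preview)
Your overall framework --- recasting the problem as $u=\mathbf{Q}F(u)$ and running a Banach fixed-point argument on a small ball in $x^\gamma C^{2,\alpha}_\Phi$ --- is precisely the approach the paper takes (by importing the proof of Theorem~\ref{cor-paper1} from \cite{paper1} with $\mathbf{Q}$ in place of $\mathbf{H}$), and your treatment of $F_2$ via the quadratic bound is correct. The gap is in your mechanism for the $F_1$-term. Your proposed interpolation would need $F_1(u)\big|_{t=0}=0$ in order to extract a factor $(T')^\delta$ from $\|F_1(u)\|_{\alpha,\gamma}$; but nothing in the hypotheses guarantees this. The bound $\|F_1(u)\|_{1,\alpha,\gamma}\le C_\mu$ says nothing about the initial trace, and $u\big|_{t=0}=0$ does not propagate to $F_1(u)\big|_{t=0}=0$ since $F_1$ is nonlinear with no assumption $F_1(0)=0$. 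Indeed, a constant-in-time $F_1$ already breaks the inequality $\|F_1(u)\|_{\alpha,\gamma}\le (T')^\delta C_\mu$. Note also that the parabolic $C^{1,\alpha}_\Phi$-norm contains only spatial derivatives ($l_1+2l_2\le 1$ forces $l_2=0$), so there is no time regularity to trade even if the initial trace did vanish.

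The paper's route avoids interpolation on $F_1(u)$ altogether. The reason $F_1$ is required to land in $x^\gamma C^{1,\alpha}_\Phi$ --- one order better than $F_2$ --- is so that the \emph{second} mapping property of the parametrix can be invoked at one regularity level higher: from Theorem~\ref{mappropH} one has $\mathbf{H}:x^\gamma C^{1,\alpha}_\Phi\to\sqrt{t}\,x^\gamma C^{2,\alpha}_\Phi$, and the paper's point in \S\ref{ShortTimeExistenceSec} is that this is among the mapping properties of $\mathbf{H}$ that carry over to $\mathbf{Q}$. One then obtains directly
\[
\|\mathbf{Q}F_1(u)\|_{2,\alpha,\gamma}\le C\sqrt{T'}\,\|F_1(u)\|_{1,\alpha,\gamma}\le C\sqrt{T'}\,C_\mu,
\]
and the analogous Lipschitz estimate, which closes both the self-map and contraction bounds without any condition on the initial trace of $F_1(u)$: the time gain lives in $\mathbf{Q}$, not in $F_1$. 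A smaller issue: your uniqueness argument via Theorem~\ref{max-princ-thm} does not apply as written, since the difference $w$ of two solutions of the \emph{semilinear} problem satisfies $(\partial_t+a\Delta)w=F(u_1)-F(u_2)\neq 0$, not the homogeneous equation; the uniqueness asserted here is the one delivered by the contraction itself.
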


%

\subsubsection*{Acknowledgements} The authors wish to thank Boris Vertman for the supervision as advisor for their Ph.D. theses.  The authors wish to thank the University of Oldenburg for the financial support and hospitality.  
The first author wishes also to thank the Coordena\c{c}\~{a}o de Aperfei\c{c}oamento de Pessoal de N\'{i}vel Superior (CAPES-Brasil- Finance Code 001) for the financial support (Process 88881.199666/2018-01).

\section{Review of part I}\label{review-sec}

As mentioned in \S \ref{Introduction}, this section is dedicated to recollect the main points of \cite{paper1}.  

\subsection{Geometry of $\Phi$-manifolds}\label{PhiMfldsSect}

We say that a compact manifold with boundary $\overline{M}$ has fibered boundary if its boundary $\partial \overline{M}$ is the total space of a fibration
\begin{equation}
    Z \hookrightarrow \partial \overline{M} \xrightarrow{\phi} Y,
\end{equation}
where both $Y$ and $Z$ are closed manifolds of dimensions $b$ and $f$ respectively.  Moreover, consider $g_{Y}$ a Riemannian metric on $Y$ and $g_{Z}$ a symmetric bilinear form on $\partial \overline{M}$ which restricts to Riemannian metrics on each fiber.  Assume, furthermore, 
\begin{equation*}
    \phi:(\partial \overline{M}, \phi^{*}g_{Y} + g_{Z})\rightarrow (Y,g_{Y}) 
\end{equation*}
to be a Riemannian submersion.
Finally, we will denote by $x\in C^\infty(\overline{M})$ the total boundary defining function of $\partial \overline{M}$.
That is $\partial \overline{M}=\{x=0\}$ and the differential $\di x$ never vanishes on $\partial\overline{M}$.
\begin{defn}
A $\Phi$-metric on $M$, which is the open interior or $\overline{M}$, is a Riemannian metric $g_{\Phi}$ that, on a collar neighborhood $\mathcal{U} \simeq (0,1)\times \partial \overline{M}$, can be expressed as
\begin{equation}
    g_{\Phi} = \dfrac{\di x^{2}}{x^{4}} + \dfrac{\phi^{*}g_{Y}}{x^{2}} + g_{Z} + h =: \widehat{g} + h,
\end{equation}
\end{defn}
where $|h|_{\widehat{g}}  = O(x)$.  A pair $(M,g_{\Phi})$ is called a $\Phi$-manifold.

\medskip
Note that, due to the fibration assumption at the boundary, $U$ can be covered by open coordinated charts $U_{i}$ on which every point $p \in U_{i}$ can be written as a triple $(x,y,z)$, where $y = (y_{1},\cdots,y_{b})$ and $z = (z_{1},\cdots,z_{f})$ are lifts of base and fiber coordinates, respectively.

Following \cite{mazzeo1998pseudodifferential}, the most reasonable family of vector fields to consider for the analysis are $\Phi$-vector fields.
The Lie algebra of $\Phi$-vector fields is denoted by $\mathcal{V}_{\Phi}(\overline{M})$ and $\Phi$-vector fields are locally spanned by
\begin{equation}
    x^{2}\partial_{x},\; x\partial_{y_{1}},\; \cdots \;, x\partial_{y_{b}}, \partial_{z_{1}},\; \cdots \;, \partial_{z_{f}}.
\end{equation}
\begin{rmk}
Note that $\Phi$-vector fields have bounded norm with respect to the $\Phi$ metric $g_{\Phi}$.
\end{rmk}
One can now recursively define $\Phi$-$k$-differentiable functions as follows:
\begin{equation} \label{ckphi}
\begin{split}
    &C^{1}_{\Phi}(\overline{M}) = \left\{u \in C^{0}(\overline{M})\; | \; Vu \in C^{0}(\overline{M}) \; \mbox{for every} \; V \in \mathcal{V}_{\Phi}(\overline{M})  \right\},\\
    &C^{k}_{\Phi}(\overline{M}) = \left\{u \in C^{k-1}_{\Phi}(\overline{M})\; | \; Vu \in C^{k-1}_{\Phi}(\overline{M}) \; \mbox{for every} \; V \in \mathcal{V}_{\Phi}(\overline{M})\right\},
\end{split}
\end{equation}
where $k \in \mathbb{Z}_{\geq 2}$.  Since $\mathcal{V}_{\Phi}(\overline{M})$ is a Lie algebra and a $C^{\infty}(\overline{M})$ module, we can consider the algebra $\diff^{*}_{\Phi}(\overline{M})$ of $\Phi$-differential operators.  
In particular a $\Phi$-$k$-differential operator $P\in \diff_{\Phi}^k(\overline{M})$  is a map $P:C_{\Phi}^{\infty}(\overline{M})\rightarrow C_{\Phi}^{\infty}(\overline{M})$ so that it can locally be expressed as 
\begin{equation}
    P = \displaystyle\sum_{|\alpha| + |\beta| + q \leq k}P_{\alpha,\beta,q}(x,y,z)\;x^{2q + |\beta|}\;\partial_{x}^{q}\;\partial_{y}^{\beta}\;\partial_{z}^{\alpha},
\end{equation}
where $\alpha$ and $\beta$ are multi-indices, each $P_{\alpha,\beta,q}$ is a smooth function, $\partial_{y} = \partial_{y_{1}}\cdots \partial_{y_{b}}$ and $\partial_{z} = \partial_{z_{1}}\cdots \partial_{z_{f}}$.  For simplicity, we often denote $\diff^{k}_{\Phi}(\overline{M})$ as $\mathcal{V}^{k}_{\Phi}$.

\subsection{Stochastic completeness of $\Phi$-manifolds}\label{stoc-comp-subsect} 

A crucial property of $\Phi$-manifolds, as highlighted in \cite[\S 3]{paper1} is that they are stochastically complete.
In our previous work stochastic completeness has been used to deduce mapping properties of the heat-kernel.
In the current work we will employ stochastic completeness to make use of the Omori-Yau maximum principle.

\medskip
A Riemannian manifold $(M,g)$ is said to be stochastically complete if the heat kernel $H(t,p,\widetilde{p})$ of the (positive) Laplace-Beltrame operator $\Delta$ satifies
\begin{equation}
    \int_{M}H(t,p,\widetilde{p})\dvol_{g}(\widetilde{p}) = 1,
\end{equation}
for every $t\geq 0$ and $p \in M$.

In particular, as shown in \cite[\S 3]{paper1}, $\Phi$-manifolds are stochastically complete because the function
\begin{equation}\label{vol-growth}
    \overline{f}(\cdot) := \dfrac{\cdot}{\log(\vol(B(p,\cdot)))} \notin L^{1}(1,+\infty).
\end{equation}
We remind the reader that (for complete manifolds) condition \eqref{vol-growth} is enough to conclude stochastic completeness as stated in \cite[Theorem 2-11]{alias2016maximum} (see also \cite{grigor1986stochastically}).

\subsection{H\"older continuity on $\Phi$-manifolds}\label{HoldPhiSect}

Next we present H\"{o}lder spaces suitable for our analysis.
As mentioned in the introduction, these spaces are "geometry-adapted" meaning that the distance function as well as the vector fields employed in the definitions encode the singularities arising from the $\Phi$-metric.
More precisely, let $0<\alpha<1$ and $u\in C^0(M\times [0,T])$, for some $T>0$.
We define
\begin{equation}\label{alphanorm}
    \|u\|_{\alpha} = \|u\|_{\infty} + \sup\left\{\dfrac{|u(p,t) - u(p',t')|}{d_{\Phi}(p,p')^{\alpha} + |t-t'|^{\alpha/2}}\right\} =: \|u\|_{\infty} + [u]_{\alpha},
\end{equation}
where the distance function $d_{\Phi}$ between $p = (x,y,z)$ and $p' = (x',y',z')$, is expressed locally as
\begin{equation}
    d_{\Phi}(p,p') = \sqrt{|x-x'|^{2} + (x+x')^{2}\|y-y'\|^{2} + (x+x')^{4}\|z-z'\|^{2}}.
\end{equation}
Thus we define the space of $\alpha$-H\"{o}lder continuous functions by
\begin{equation}\label{Holdernoweights}
    C^{\alpha}_{\Phi}(M\times[0,T]) = \{u \in C^{0}(\overline{M}\times [0,T])\; | \; \|u\|_{\alpha} < +\infty\}.
\end{equation}
As it is natural, we define $\alpha$-H\"{o}lder spaces with higher regularity by
\begin{equation}
    C^{k,\alpha}_{\Phi}(M\times[0,T]) := \left\{u \in C^{0}(\overline{M}\times [0,T])\; \bigg| \; \begin{array}{l}
        \mathcal{V}^{l_{1}}_{\Phi}\partial_{t}^{l_{2}}u \in C^{\alpha}_{\Phi}(M\times [0,T]),\\ 
        with \; l_{1} + 2l_{2} \leq k
    \end{array}\right\},
\end{equation}
where $k \in \mathbb{Z}_{\geq 0}$.  
For each pair $(k,\alpha)$ as above, $C^{k,\alpha}_{\Phi}(M\times [0,T])$ is a Banach space endowed with the norm
\begin{equation}
    \|u\|_{k,\alpha} := \displaystyle\sum_{l_{1}+2l_{2}\leq k}\sum_{V \in \mathcal{V}^{l_{1}}_{\Phi}}\|(V\circ \partial^{l_{2}}_{t})u\|_{\alpha}.
\end{equation}
It follows directly from the definition that $C^{k_{2},\alpha}_{\Phi}(M\times [0,T])\subset C^{k_{1},\alpha}_{\Phi}(M\times [0,T])$ whenever $0\leq k_{1} \leq k_{2}$.

\medskip
We can generalize to weighted-H\"older spaces as follows: for $\gamma \in \mathbb{R}$, define
\begin{equation}
    \begin{split}
        &x^{\gamma}C^{k,\alpha}_{\Phi}(M\times [0,T]) := \{x^{\gamma}u\; | \; u \in C^{k,\alpha}_{\Phi}(M\times [0,T])\}, \\
        &\|x^{\gamma}u\|_{k,\alpha,\gamma} := \|u\|_{k,\alpha}.
    \end{split}
\end{equation}
The pair $(x^{\gamma}C^{k,\alpha}_{\Phi}(M\times [0,T],\|\cdot\|_{k,\alpha,\gamma})$ is a Banach space as well.  One can conclude this simply by noticing that the operator ``multiplication by $x^{\gamma}$'' $\textbf{M}(x^{\gamma})$ is an isometry between $C^{k,\alpha}_{\Phi}$ and $x^{\gamma}C^{k,\alpha}_{\Phi}$.

\subsection{Mapping properties on $\Phi$-manifolds}\label{heatkerelmap-subsect}

The mapping properties of the heat-kernel op $\mathbf{H}$  proved in \cite{paper1} will play a key role in the construction of the parametrix for heat-type operators.
Therefore, for the sake of completeness, we present them here.
We refer the interested reader to our previous work for a very detailed analysis.

\medskip
For a function $u:M\times [0,T]\rightarrow \mathbb{R}$, $T>0$, define the function $\mathbb{H}u$ by convolution with the heat-kernel associated to the unique self-adjoint extension of the positive Laplace-Beltrami operator $\Delta_{\Phi}$.
That is 
\begin{equation}\label{heatkerelop-def}
    \mathbf{H}u(p,t) := \int_{0}^{t}\int_{M}H(t-\widetilde{t},p,\widetilde{p})u(\widetilde{p},\widetilde{t})\dvol_{\Phi}(\widetilde{p})\di \widetilde{t},
\end{equation}
By making use of the asymptotic behavior of the heat-kernel $H$ provided in \cite[Theorem 7.2]{VerTal}, we proved:
\begin{thm}\label{heatkernelmap}\label{MappingPropertiesTHM}\label{SecondMappingPropertyTHM}\label{mappropH}
\cite[Theorem 1.1]{paper1}  Let $(M,g_{\Phi})$ be a $\Phi$-manifold.  Then, for any $0<\alpha<1$, $k \in \mathbb{Z}_{\geq 0}$, $\gamma \in \mathbb{R}$ and $T>0$, the heat-kernel operator acts continuously as follows:
\begin{equation}
    \begin{split}
        &\mathbf{H}:x^{\gamma}C^{k,\alpha}_{\Phi}(M\times [0,T])\rightarrow x^{\gamma}C^{k+2,\alpha}_{\Phi}(M\times [0,T]), \\
        &\mathbf{H}:x^{\gamma}C^{k,\alpha}_{\Phi}(M\times [0,T])\rightarrow \sqrt{t}\;x^{\gamma}C^{k+1,\alpha}_{\Phi}(M\times [0,T]), \\
        &\mathbf{H}:x^{\gamma}C^{k,\alpha}_{\Phi}(M\times [0,T])\rightarrow  t^{\alpha/2}x^{\gamma}C^{2}_{\Phi}(M\times [0,T]).
    \end{split}
\end{equation}
\end{thm}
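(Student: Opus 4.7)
The plan is to reduce the theorem to pointwise heat-kernel bounds coming from \cite[Theorem 7.2]{VerTal} and then carefully estimate convolutions in the weighted $\Phi$-H\"older norms. First I would deal with the weight: the multiplication operator $\mathbf{M}(x^\gamma)$ is an isometry between $C^{k,\alpha}_\Phi$ and $x^\gamma C^{k,\alpha}_\Phi$, but $\mathbf{H}$ does not commute with it, so the real content is to show that the conjugated operator $\mathbf{M}(x^{-\gamma})\,\mathbf{H}\,\mathbf{M}(x^\gamma)$ maps $C^{k,\alpha}_\Phi$ to itself boundedly. This is done by absorbing the ratio $x^\gamma(\widetilde p)/x^\gamma(p)$ into the integrand of \eqref{heatkerelop-def} and checking, using the polyhomogeneous description of $H$ on the $\Phi$-heat double space, that the off-diagonal Gaussian decay dominates any such polynomial factor in $x$ and $\widetilde x$ uniformly down to $\{x=0\}$.

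Next I would unify the three target spaces by noting that they differ only by a prefactor ($1$, $\sqrt{t}$, $t^{\alpha/2}$) controlling how many spatial derivatives or H\"older units one extracts. Applying $\Phi$-vector fields $V \in \mathcal{V}_\Phi$ and $\partial_t$ under the convolution integral, parabolic scaling dictates that each $V$ costs $(t-\widetilde t)^{-1/2}$ and each $\partial_t$ costs $(t-\widetilde t)^{-1}$ in the kernel; the sharp versions of these bounds for the $\Phi$-heat kernel are precisely what \cite[Theorem 7.2]{VerTal} supplies after rescaling in the model at each boundary hypersurface of the heat blow-up. Integrating the resulting singularities against the L\'evy/stochastic-completeness normalisation from $0$ to $t$ produces a $\log$-free $C^{k+2}$ gain, and truncating one derivative or using $\alpha$ units of H\"older regularity in time yields the prefactors $\sqrt t$ and $t^{\alpha/2}$ respectively.

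The main obstacle is the parabolic H\"older seminorm estimate \emph{uniformly up to the boundary}. For base points $(p,t)$ and $(p',t')$ I would split the time-integral into the \emph{far} regime $t - \widetilde t \ge c\bigl(d_\Phi(p,p')^2 + |t-t'|\bigr)$, where one applies the mean-value theorem in $p$ (or in $t$) and uses one extra derivative estimate on $H$, and the \emph{near} regime, where one uses the raw pointwise bounds. Because $d_\Phi$ itself degenerates at $\{x=0\}$ and the volume element $\dvol_\Phi$ blows up as $x^{-(b+2)}\,dx\,dy\,dz$, one must verify that the Gaussian exponent in the heat-kernel asymptotics of \cite[Theorem 7.2]{VerTal} controls $d_\Phi$-balls with the correct $x$-dependent radii, rather than Euclidean ones; this is the genuinely delicate point and the place where the full $\Phi$-structure (not merely a scattering or $b$-structure) really enters.

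Finally, the stochastic completeness of $(M,g_\Phi)$ recalled in \S\ref{stoc-comp-subsect} supplies the total-mass identity $\int_M H(t,p,\widetilde p)\,\dvol_\Phi(\widetilde p) = 1$, which gives the basic $L^\infty\!\to\!L^\infty$ bound on $\mathbf{H}$ and, by a standard subtraction trick, lets the H\"older argument close with constants independent of the distance to $\partial\overline{M}$. Assembling weight conjugation, parabolic derivative scaling, near/far time splitting, and stochastic completeness then yields the three continuity statements simultaneously, the prefactors $1$, $\sqrt{t}$, $t^{\alpha/2}$ reflecting exactly the number of derivative or H\"older units one has paid in the kernel.
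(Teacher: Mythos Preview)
This theorem is not proved in the present paper: it is merely recalled from the companion work \cite[Theorem 1.1]{paper1} in the review section \S\ref{heatkerelmap-subsect}, with no argument given here beyond the remark that it is obtained ``by making use of the asymptotic behavior of the heat-kernel $H$ provided in \cite[Theorem 7.2]{VerTal}''. Consequently there is no proof in this paper against which to compare your proposal in detail.

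That said, your outline is consistent with the fragments of the argument visible in this paper. The proof of Lemma~\ref{TechincalLemma} explicitly invokes \cite[Theorems 6.1 and 6.2]{paper1} and notes that those estimates ``made use of the fact that the heat-kernel $H$ is `stochastically complete', meaning that it integrates to $1$'', which matches your use of stochastic completeness for the basic $L^\infty$ bound and the subtraction trick. The same lemma also refers to the lift of the kernel to the heat space $M^2_h$ and its behaviour near the boundary faces $\ff,\fd,\lf,\rf,\td$ and to estimating ``in projective coordinates'', which is the polyhomogeneous blow-up picture you are invoking from \cite{VerTal}. Your weight-conjugation step and the near/far time-splitting for the H\"older seminorm are standard parabolic Schauder manoeuvres and are plausible ingredients of the actual proof in \cite{paper1}, but the present paper gives no independent confirmation of those specific steps. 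If you want a genuine comparison, you will have to consult \cite{paper1} directly.
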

Consequently, we proved the following result regarding short-time existence and regularity solutions for the heat equation oh $\Phi$-manifolds.

\begin{thm}\label{cor-paper1}
\cite[Corollary 1.2]{paper1} Let $\alpha,k,\gamma$ and $T$ be as in Theorem \ref{heatkernelmap} and consider the nonlinear Cauchy problem
\begin{equation} \label{CP1}
    (\partial_{t} + \Delta_{\Phi})u = F(u),\; u|_{t=0} = 0.
\end{equation}
Assume $F$ to satisfy the following conditions:
\begin{enumerate}
\item $F:x^{\gamma}C^{k+2,\alpha}_{\Phi}(M\times [0,T])\rightarrow C^{k,\alpha}_{\Phi}(M\times [0,T])$;
\item $F$ can be written as a sum $F = F_{1} + F_{2}$ with 
\begin{itemize}
\item[i)]$F_{1}:x^{\gamma}C^{k+2,\alpha}_{\Phi}\rightarrow x^{\gamma}C^{k+1,\alpha}_{\Phi}(M\times [0,T]),$
\item[ii)] $F_{2}:x^{\gamma}C^{k+2,\alpha}_{\Phi}\rightarrow x^{\gamma}C^{k,\alpha}_{\Phi}(M\times [0,T]);$
\end{itemize}
\item For $u,u' \in x^{\gamma}C^{k+2,\alpha}_{\Phi}(M\times [0,T])$ with $\|\cdot\|_{k+2,\alpha,\gamma}$-norm bounded from above by some $\eta>0$, i.e. $\|u\|_{k+2,\alpha,\gamma},\|u'\|_{k+2,\alpha,\gamma} \le \eta$,
there exists some $C_{\eta}>0$ such that
\begin{itemize}
\item[i)] $\|F_{1}(u) - F_{1}(u')\|_{k+1,\alpha,\gamma} \le C_{\eta}\|u-u'\|_{k+2,\alpha,\gamma}$, $\|F_{1}(u)\|_{k+1,\alpha,\gamma} \le C_{\eta}\|u\|_{k+2,\gamma,\alpha},$
\item[ii)] $\|F_{2}(u) - F_{2}(u')\|_{k,\alpha,\gamma} \le C_{\eta}\max\{\|u\|_{k+2,\alpha,\gamma},\|u'\|_{k+2,\alpha,\gamma}\}\|u-u'\|_{k+2,\alpha,\gamma}$, \newline $\|F_{2}(u)\|_{k,\alpha,\gamma} \le C_{\eta}\|u\|^{2}_{k+2,\alpha,\gamma}.$
\end{itemize}
\end{enumerate}
Then there exists a unique solution $u^{*} \in x^{\gamma}C^{k,\alpha}_{\Phi}(M\times [0,T_{0}])$ of the Cauchy problem \eqref{CP1}, for some $T_{0}>0$ sufficiently small.
\end{thm}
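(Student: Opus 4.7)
The plan is to recast the Cauchy problem \eqref{shorttime} as a fixed-point equation using the parametrix $\mathbf{Q}$ of Theorem \ref{theorem2}, and then apply Banach's fixed-point theorem on a small ball in $x^{\gamma}C^{2,\alpha}_{\Phi}(M\times [0,T'])$ for $T'$ sufficiently small. Concretely, since $u|_{t=0}=0$ and $\mathbf{Q}$ sends $x^{\gamma}C^{\alpha}_{\Phi}$ to the subspace of functions vanishing at $t=0$ (it is a right inverse of $\partial_{t}+a\Delta$ with zero initial data), a solution of \eqref{shorttime} is precisely a fixed point of the nonlinear map $\Psi(u):=\mathbf{Q}F(u)$ on $x^{\gamma}C^{2,\alpha}_{\Phi}(M\times [0,T'])$.

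The key technical ingredient is that $\mathbf{Q}$, being obtained by iterating $\mathbf{H}$ against the heat error from the variable coefficient $a$, inherits from Theorem \ref{heatkernelmap} not only the two-derivative gain $\mathbf{Q}:x^{\gamma}C^{\alpha}_{\Phi}\to x^{\gamma}C^{2,\alpha}_{\Phi}$ but also the companion estimates with a small time factor: $\mathbf{Q}:x^{\gamma}C^{1,\alpha}_{\Phi}\to (T')^{1/2}x^{\gamma}C^{2,\alpha}_{\Phi}$ and $\mathbf{Q}:x^{\gamma}C^{\alpha}_{\Phi}\to (T')^{\alpha/2}x^{\gamma}C^{2}_{\Phi}$ (with implicit constants independent of $T'\le T$). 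These should be derived in \S\ref{ParametrixSection} as part of the parametrix construction, and they are what allow us to absorb the fixed Lipschitz constants $C_{\mu}$ by shrinking $T'$.

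With these in hand, I would carry out two estimates on the closed ball $B_{\mu}:=\{u:\|u\|_{2,\alpha,\gamma}\le\mu\}$. First, invariance: using $F=F_{1}+F_{2}$, the hypotheses $\|F_{1}(u)\|_{1,\alpha,\gamma}\le C_{\mu}$ and $\|F_{2}(u)\|_{\alpha,\gamma}\le C_{\mu}\|u\|^{2}_{2,\alpha,\gamma}$ combined with the two refined mapping properties give
\begin{equation*}
\|\Psi(u)\|_{2,\alpha,\gamma}\le C\bigl((T')^{1/2}C_{\mu}+(T')^{\alpha/2}C_{\mu}\mu^{2}\bigr),
\end{equation*}
which can be made $\le \mu$ by first choosing $\mu$ appropriately and then shrinking $T'$. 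Second, contraction: for $u,u'\in B_{\mu}$,
\begin{equation*}
\|\Psi(u)-\Psi(u')\|_{2,\alpha,\gamma}\le C(T')^{1/2}C_{\mu}\|u-u'\|_{2,\alpha,\gamma}+C(T')^{\alpha/2}C_{\mu}\mu\|u-u'\|_{2,\alpha,\gamma},
\end{equation*}
so $\Psi$ is a strict contraction once $T'$ is small enough. Banach's theorem then produces the unique $u^{*}\in B_{\mu}\subset x^{\gamma}C^{2,\alpha}_{\Phi}(M\times [0,T'])$, and uniqueness in the whole space (not just in $B_{\mu}$) follows by applying the maximum principle of Theorem \ref{max-princ-thm} to the difference of two hypothetical solutions with identical initial data, after absorbing the nonlinear difference $F(u)-F(u')$ into the linear side on a possibly smaller time interval.

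The main obstacle I foresee is not the fixed-point argument itself, which is standard once set up, but rather making sure the short-time mapping estimates for $\mathbf{Q}$ (with the $(T')^{1/2}$ and $(T')^{\alpha/2}$ prefactors) are actually available at the regularity needed: the parametrix is built as a Neumann-type series in which each term involves $\mathbf{H}$ composed with multiplication by derivatives of $a\in C^{\beta}_{\Phi}$, and one must check that the series converges in the appropriate $x^{\gamma}C^{2,\alpha}_{\Phi}$ norm with the small-time prefactors preserved. This is exactly why the hypothesis $\alpha<\beta$ is imposed, and it is the same mechanism used in \cite{bahuaud2019long}; once this is in place, the rest of the argument proceeds as outlined.
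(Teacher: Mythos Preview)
Your fixed-point scheme $\Psi(u)=\mathbf{Q}F(u)$ is exactly the paper's approach: \S\ref{ShortTimeExistenceSec} simply states that the proof from \cite[pp.~30--31]{paper1} carries over verbatim once $\mathbf{H}$ is replaced by $\mathbf{Q}$, since only those mapping properties of $\mathbf{H}$ that $\mathbf{Q}$ also enjoys are used.

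Two technical points in your execution need correcting. First, the third mapping property you invoke, $\mathbf{Q}:x^{\gamma}C^{\alpha}_{\Phi}\to (T')^{\alpha/2}x^{\gamma}C^{2}_{\Phi}$, is never established for $\mathbf{Q}$ in the paper (Proposition~\ref{RightInverseProp} records only the first two), and in any case it lands in $C^{2}_{\Phi}$ rather than $C^{2,\alpha}_{\Phi}$, so it cannot be used to bound $\|\Psi(u)\|_{2,\alpha,\gamma}$. It is also unnecessary: the quadratic structure of $F_{2}$ already supplies the smallness. Applying the plain first mapping property of $\mathbf{Q}$ to the $F_{2}$-piece yields contributions $CC_{\mu}\mu^{2}$ for invariance and $CC_{\mu}\mu$ for the contraction constant, both of which are made small by choosing $\mu$ small; the factor $(T')^{1/2}$ from the second mapping property is needed only to handle the $F_{1}$-piece, whose bound $\|F_{1}(u)\|_{1,\alpha,\gamma}\le C_{\mu}$ in Corollary~\ref{theorem4} carries no intrinsic smallness. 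Second, your appeal to Theorem~\ref{max-princ-thm} for uniqueness outside the ball $B_{\mu}$ does not go through as written: that maximum principle is for the \emph{homogeneous linear} problem, and the difference of two nonlinear solutions satisfies $(\partial_{t}+a\Delta)(u-u')=F(u)-F(u')$, which cannot be ``absorbed into the linear side'' in any way compatible with its hypotheses. Uniqueness instead comes directly from the contraction in $B_{\mu}$, together with the observation that any $C^{2,\alpha}_{\Phi}$ solution with zero initial data lies in $B_{\mu}$ for sufficiently small time.
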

A generalization of Theorem \ref{cor-paper1} for some linear parabolic equation with non-constant coefficient will be presented here.
This will be achieved by a slight generalization of the mapping properties of $\mathbf{H}$ to the constructed parametrix for heat-type operators.

\section{Maximum principle for stochastically complete manifolds} \label{maxsection}

In order to construct a parametrix for the heat-type operator $\partial_t+a\Delta_\Phi$, we will employ a maximum principle.
We have seen in \S \ref{stoc-comp-subsect} that $\Phi$-manifolds are stochastically complete. 
A very neat property of stochastically complete manifolds, which is actually equivalent to stochastic completeness, is that they satisfy the Omori-Yau maximum principle. 
We begin this section by recalling the (strong) Omori-Yau maximum principle.
Afterwards we employ Omori-Yau to prove a parabolic maximum principle based on the first author's previous work \cite{bruno}.

\subsection{Omori-Yau maximum principle}\label{Omori-YauSection}

The Omori-Yau maximum principle for the Laplacian, defined in e.g. \cite[Definition 2.1]{alias2016maximum}, means that for any function $u \in C^{2}({{M}})$ with 
bounded supremum there is a sequence $\{p_{k}\}_{k} \subset {M}$ satisfying
\begin{equation}\label{omori-sup-strong}
    u(p_{k}) > \displaystyle\sup_{{M}}u - \dfrac{1}{k}, \quad |\nabla u(p_k)| \leq \dfrac{1}{k}, \quad - \Delta_{{g}}u(p_{k}) < \dfrac{1}{k}.
\end{equation}
Similarly, provided $u$ has bounded infimum, there exists a sequence $\{p'_{k}\}_{k} \subset {M}$ such that 
\begin{equation} \label{omori-inf-strong}
    u(p'_{k}) < \inf_{{M}} u + \dfrac{1}{k}, \quad |\nabla u(p'_k)| \leq \dfrac{1}{k}, \quad- \Delta_{{g}}u(p'_{k}) > \dfrac{1}{k}.
\end{equation}
As an example, by \cite{yau}, see also \cite[Theorem 2.3]{alias2016maximum}, the Omori-Yau maximum principle for the Laplacian holds for every complete Riemannian manifold $(M,g)$ with Ricci curvature bounded from below. 
We shall refer to this principle as the \emph{strong} Omori-Yau maximum
principle in order to distinguish it from another version of the principle on stochastically complete manifolds.

\begin{rmk}
We want to point out a difference with \cite{alias2016maximum} in the 
different sign convention for the Laplace-Beltrami operator.
\end{rmk}

According to Pigola, Rigoli and Setti in \cite[Theorem 1.1]{pigola} (see also \cite[Theorem 2.8 (i) and (iii)]{alias2016maximum}), a similar version of the Omori-Yau maximum principle holds for
stochastically complete manifolds. 
More precisely, for any $({M},g)$ satisfying e.g. the
volume growth condition in \eqref{vol-growth}, and any function $u \in C^{2}({M})$ bounded from above, there is a sequence $\{p_{k}\}_{k} \subset {M}$ such that
\begin{equation}\label{omori-sup}
    u(p_{k}) > \displaystyle\sup_{{M}}u - \dfrac{1}{k} \;\; \mbox{and} \;\; - \Delta_{{g}}u(p_{k}) < \dfrac{1}{k}.
\end{equation}
Similarly, if $u$ is bounded from below, there exists a sequence $\{p'_{k}\}_{k} \subset {M}$ such that 
\begin{equation} \label{omori-inf}
    u(p'_{k}) < \inf_{{M}} u + \dfrac{1}{k} \;\; \mbox{and} \;\; - \Delta_{{g}}u(p'_{k}) > \dfrac{1}{k}.
\end{equation}

\subsection{Classical H\"{o}lder spaces}\label{ClassicalHolderSpaceSec}

As mentioned above, if a Riemannian manifold $(M,g)$ is e.g. stochastically complete, then the Omori-Yau maximum principle in either of the formulations \eqref{omori-sup} and \eqref{omori-inf} hold for bounded functions.
For general non-compact manifolds one can not expect to be dealing with bounded functions.
Now, $\Phi$-manifolds are stochastically complete as discussed in \S \ref{stoc-comp-subsect} (see also \cite[\S 3]{paper1}).
Also, $\Phi$-manifolds can be thought as non-compact manifolds which are asymptotically conical (this can be achieved by performing a change of coordinates $r=1/x$ therefore "pushing" the boundary to infinity).
This means that one can not use Omori-Yau for any function.
But in \S \ref{HoldPhiSect} we have introduced some geometry-adapted H\"{o}lder spaces; in view of the H\"{o}lder norm defined in \eqref{alphanorm} one sees that $\Phi$-$k,\alpha$ H\"{o}lder functions are indeed bounded and, as a bonus, the heat-kernel is very well behaved as an operator between those spaces.
This leads to the following observation.
If a stochastically complete Riemannian manifold is given, then the Omori-Yau maximum principle would hold for functions living in some appropriate H\"{o}lder space.
Therefore here we give the classical definition of H\"{o}lder spaces and later (\S \ref{Maxsubsect}) we prove a parabolic Omori-Yau maximum principle for functions lying in such H\"{o}lder spaces.
As a remark, one can see that, in the setting of $\Phi$-manifolds, the geometry-adapted H\"{o}lder spaces (defined in \S \ref{HoldPhiSect}) are a subspace of the ones defined here; thus implying that the maximum principle presented in Theorem \ref{max-princ-thm} will also hold for $\Phi$-$k,\alpha$ H\"{o}lder functions.
\begin{defn} \label{holder}
Let $\alpha \in (0,1)$. 
We define the semi-norm 
\begin{equation}
    [u]_{\alpha} := \sup_{{M}^2_T} \left\{ \dfrac{|u(p,t)-u(p',t')|}{d(p,p')^{\alpha}+|t-t'|^{\alpha/2}}\right\},
\end{equation}
where the supremum is over ${M}^2_T$ with ${M}_T := {M} \times [0,T]$.  The distance $d$ is induced by the metric $g$.
The H\"{o}lder space $ C^{\alpha}({M}\times [0,T])$, is then defined as usual, as the space of continuous functions 
$u \in C^{0}({M}\times [0,T])$ with bounded $\alpha$-norm, that is
\begin{equation}\label{Classicalalphanorm}
    \|u\|_{\infty} + [u]_{\alpha}=:\|u\|_{\alpha} <\infty.
\end{equation}
\end{defn}

\medskip

Once equipped with the $\alpha$-norm \eqref{Classicalalphanorm}, the resulting normed vector space $C^{\alpha}({M}\times [0,T])$ is a Banach space.
Similarly one defines higher order H\"{o}lder spaces.
\begin{defn}\label{ClassicHolder}
Let $({M},g)$ be a Riemannian manifold and consider $k,l_1$ and $l_2$ to be non negative integers. 
We say that a function $u$ lies in $C^{k,\alpha}({M}\times[0,T])$ if $(P\circ \partial^{l_{2}}_{t})u$ lies in  $C^{\alpha}({M}\times [0,T]),$ for $P\in \diff^{l_1}({M}), \; 0\le l_{1} + 2l_{2} \le k$.  Here $\diff^{l_1}({M})$ denotes the space of differential operators of order $l_1$ over ${M}$. 
In particular, this is equivalent to require that the $(k,\alpha)$-norm, defined by
\begin{equation}
\|u\|_{k,\alpha}=\|u\|_\alpha+\displaystyle\sum_{l_{1} + 2l_{2} \le k} \sum_{P\in\diff^{l_1}({M})} \|(P\circ \partial_{t}^{l_{2}})u\|_{\alpha}
\end{equation}
is bounded.
\end{defn}
\begin{rmk}
By definition, we have the chain of inclusions $C^{l,\alpha}({M}\times[0,T])\subset C^{k,\alpha}({M}\times[0,T])$ for every $0\le k\le l$.
\end{rmk}

\subsection{Maximum principle}\label{Maxsubsect}

Based on the Omori-Yau maximum principle in \S \ref{Omori-YauSection}, the first named author jointly with Hartmann and Vertman proved the following enveloping theorem (cf. \cite{bruno}).
For convenience to the reader, we present the proof here as well. 

\begin{prop}\cite[Proposition 3.1]{bruno}\label{envelope}
Let $({M},g)$ be a stochastically complete manifold and consider $u\in C^{2,\alpha} ({M}\times[0,T])$. 
Then  the functions
$$u_{\sup} (t) := \sup_{{M}}u(\cdot, t), \quad u_{\inf}(t):= \inf_{{M}}u(\cdot, t)$$ are locally Lipschitz, hence differentiable
almost everywhere in $(0,T)$.
Moreover, at those differentiable times $t \in (0,T)$ we find
\begin{equation}\label{dini}
\begin{split}
&\frac{\partial}{\partial t}u_{\sup} (t) \leq \lim_{\ \epsilon \to \, 0^+} 
\left( \limsup_{k\to \infty} \frac{\partial u}{\partial t} \left(p_k(t+\epsilon), t + \epsilon\right)\right), \\
&\frac{\partial}{\partial t} u_{\inf} (t) \geq \lim_{\ \epsilon \to \, 0^+} 
\left( \liminf_{k\to \infty} \frac{\partial u}{\partial t} \left(p'_k(t+\epsilon), t + \epsilon\right)\right),  
\end{split}
\end{equation}
where $\left(p_k(t+\epsilon)\right)_k$ and $\left(p'_k(t+\epsilon)\right)$ are maximizing and minimizing sequences for the functions $u(\_,t+\epsilon)$ respectively as in \eqref{omori-sup} and \eqref{omori-inf}. 
\end{prop}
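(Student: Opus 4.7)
The plan is to adapt the classical envelope theorem to the stochastically complete setting, where the Omori--Yau principle plays the role of the compactness argument that would normally ensure the supremum is attained.

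First I would show that $u_{\sup}$ and $u_{\inf}$ are globally Lipschitz on $[0,T]$. Since $u\in C^{2,\alpha}(M\times[0,T])$, Definition \ref{ClassicHolder} gives $\partial_t u\in C^{\alpha}(M\times[0,T])$; in particular $|\partial_t u|\le L$ uniformly. Hence $|u(p,t)-u(p,s)|\le L|t-s|$ for every $p\in M$, and taking the supremum (resp.\ infimum) over $p$ yields $|u_{\sup}(t)-u_{\sup}(s)|\le L|t-s|$ and the analogous bound for $u_{\inf}$. Almost-everywhere differentiability then follows from the classical fact that one-variable Lipschitz functions are differentiable a.e.

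Next, fix a time $t\in(0,T)$ at which $u_{\sup}$ is differentiable and let $\epsilon>0$ be small. Since $u(\cdot,t+\epsilon)$ is bounded, the Omori--Yau principle \eqref{omori-sup} for the stochastically complete manifold $(M,g)$ produces a sequence $(p_k(t+\epsilon))_k$ with $u(p_k(t+\epsilon),t+\epsilon)>u_{\sup}(t+\epsilon)-1/k$. Combining this with the trivial bound $u(p_k(t+\epsilon),t)\le u_{\sup}(t)$ and dividing by $\epsilon$ gives
\begin{equation*}
\frac{u_{\sup}(t+\epsilon)-u_{\sup}(t)}{\epsilon}\le \frac{u(p_k(t+\epsilon),t+\epsilon)-u(p_k(t+\epsilon),t)}{\epsilon}+\frac{1}{k\epsilon}.
\end{equation*}
Letting $k\to\infty$ removes the $1/(k\epsilon)$ term and yields the same inequality with $\limsup_k$ on the right.

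Finally I would replace the difference quotient by $\partial_t u(p_k(t+\epsilon),t+\epsilon)$. Writing the quotient as the mean of $\partial_s u(p_k(t+\epsilon),\cdot)$ on $[t,t+\epsilon]$ and invoking the $\alpha/2$-H\"older continuity of $\partial_t u$ in time (uniform in $p$, as it is part of the $C^{\alpha}$ seminorm), the replacement error is uniformly $O(\epsilon^{\alpha/2})$. Passing to $\epsilon\to 0^+$ and using differentiability of $u_{\sup}$ at $t$ produces the first inequality in \eqref{dini}; the bound for $u_{\inf}$ is obtained by applying the dual principle \eqref{omori-inf}, or equivalently by working with $-u$. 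The delicate point is the ordering of limits: since $M$ is non-compact, no convergent subsequence of $(p_k(t+\epsilon))_k$ is available, so the $k$-limit must be taken before the $\epsilon$-limit, and uniform time-H\"older control on $\partial_t u$ is exactly what allows the difference-quotient estimate to survive the $\limsup_k$.
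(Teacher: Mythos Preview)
Your argument is correct and follows essentially the same route as the paper's: Omori--Yau for the maximizing sequence, a mean-value/H\"older estimate to pass from the difference quotient to $\partial_t u(p_k(t+\epsilon),t+\epsilon)$ with an $O(\epsilon^{\alpha/2})$ error, and then $\epsilon\to 0^+$. The only difference is organizational: you establish the Lipschitz bound directly from $\|\partial_t u\|_\infty$ at the outset, whereas the paper derives it as a byproduct of the same difference-quotient estimate before passing to the limit.
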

\begin{proof}
We begin by applying \eqref{omori-sup} to $u(t+\epsilon)$. 
Moreover, an application of the Mean Value Theorem leads to 
	\begin{equation*}
		u_{\sup} (t+\epsilon) \leq u(p_k(t+\epsilon), t) + \epsilon \cdot \frac{\partial u}{\partial t}(p_k(t+\epsilon), \xi) + \frac{1}{k},
	\end{equation*}
	for some $\xi \in (t,t+ \epsilon)$. 
Next we want to estimate $u_{\sup}(t+\epsilon)$ from below. 
By recalling that $u_{\sup}(t)\ge u\left(p_k(t+\epsilon),t\right)$ we get
	\begin{equation*}
		u_{\sup} (t+\epsilon) \geq u(p_k(t+\epsilon),t) + \epsilon \cdot 
		\frac{u_{\sup} (t+\epsilon) - u_{\sup} (t)}{\epsilon}. 
	\end{equation*}
	Combining the inequalities above, canceling the term $u(p_k(t+\epsilon),t)$ on each side and taking limit superior as $k \to \infty$ on the right hand side, we obtain
	\begin{equation*}
		\epsilon \cdot \frac{u_{\sup} (t+\epsilon) - u_{\sup} (t)}{\epsilon}
		\leq \epsilon \cdot \limsup_{k\to \infty}\frac{\partial u}{\partial t}(p_k(t+\epsilon), \xi).
	\end{equation*}
	Canceling $\epsilon$ on both sides, we find
	\begin{equation}\label{est1}
		\begin{split}
			\frac{u_{\sup} (t+\epsilon) - u_{\sup} (t)}{\epsilon}
			\leq &\limsup_{k\to \infty} \left( \frac{\partial u}{\partial t}(p_k(t+\epsilon), \xi)
			- \frac{\partial u}{\partial t}(p_k(t+\epsilon), t+\epsilon) \right)
			\\ &+ \limsup_{k\to \infty}\frac{\partial u}{\partial t}(p_k(t+\epsilon), t+\epsilon).
		\end{split}
	\end{equation}
	Since $u \in C^{2,\alpha}(M\times [0,T])$, we can estimate
	\begin{equation}\label{est2}
		\begin{split}
			&\bullet \quad  \limsup_{k\to \infty} \left| \frac{\partial u}{\partial t}(p_k(t+\epsilon), \xi)
			- \frac{\partial u}{\partial t}(p_k(t+\epsilon), t+\epsilon) \right| \leq \|u\|_{2,\alpha} \epsilon^{\alpha/2}, \\
			&\bullet \quad \limsup_{k\to \infty}\left| \frac{\partial u}{\partial t}(p_k(t+\epsilon), t+\epsilon) \right| \leq \|u\|_{2,\alpha}.
		\end{split}
	\end{equation}
	Hence, the two terms on the right-hand side in 
	\eqref{est1} are bounded uniformly in $\epsilon$. Now, after repeating the arguments with 
	the roles of $u_{\sup}(t)$ and $u_{\sup}(t+\epsilon)$ interchanged, we conclude that
	$u_{\sup}$ is locally Lipschitz.
Consequently, Rademacher's theorem implies that $u_{\sup}$ differentiable
	almost everywhere.
	
Let now $t\in (0,T)$ be one of the points at which $u_{\sup}$ is differentiable.  From \eqref{est1} and the first line in \eqref{est2}, by taking $\epsilon \to 0$ we conclude that
	\begin{equation}
		\begin{split}
			\frac{\partial}{\partial t} u_{\sup}(t) \leq \lim_{\epsilon \to 0} 
			\left( \limsup_{k\to \infty} \frac{\partial u}{\partial t} \left(p_k(t+\epsilon), t + \epsilon\right)\right). ,
		\end{split}
	\end{equation}
showing that the first inequality in \eqref{dini} holds. 
The second inequality follows from the first, using \eqref{omori-inf}, with $u$ replaced by $(-u)$.
\end{proof}
\noindent
We are now in the position to prove the claimed maximum principle.

\begin{thm}\label{maximumprinciple}(Theorem \ref{max-princ-thm})
Let $({M},g)$ be a $m$-dimensional stochastically complete manifold.
Furthermore, let $a\ge \delta>0$ be a bounded function on ${M}$. 
If $u \in C^{2,\alpha}({M}\times [0,T])$ is a solution of the Cauchy problem 
	\begin{equation}
		(\partial_{t} + a\Delta_{{g}})u = 0, \;\; u|_{t=0} = 0
	\end{equation}
then $u=0$.
\end{thm}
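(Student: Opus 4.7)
The plan is to reduce uniqueness to the envelope statement already established in Proposition~\ref{envelope}. Since $u \in C^{2,\alpha}(M \times [0,T])$ has bounded sup-norm, for every $t \in [0,T]$ the spatial envelopes
\[
u_{\sup}(t) := \sup_{M} u(\cdot,t), \qquad u_{\inf}(t) := \inf_{M} u(\cdot,t)
\]
are finite, and Proposition~\ref{envelope} applies: both envelopes are locally Lipschitz on $[0,T]$, hence differentiable almost everywhere, and at a.e.\ time $t$ they obey the Dini-type inequalities \eqref{dini} against the Omori--Yau sequences $p_k(t+\epsilon)$ and $p'_k(t+\epsilon)$.

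Next I would feed the PDE into \eqref{dini}. At a differentiable time $t \in (0,T)$ of $u_{\sup}$, substituting $\partial_t u = -a\,\Delta_g u$ and using the Omori--Yau bound $-\Delta_g u(p_k(t+\epsilon), t+\epsilon) < 1/k$ (valid since $u(\cdot,t+\epsilon)$ is bounded above) together with $0 < a \le \|a\|_\infty$ gives
\[
\partial_t u\bigl(p_k(t+\epsilon), t+\epsilon\bigr) = a\bigl(p_k(t+\epsilon), t+\epsilon\bigr) \cdot \bigl(-\Delta_g u\bigr)\bigl(p_k(t+\epsilon), t+\epsilon\bigr) < \frac{\|a\|_\infty}{k}.
\]
Sending $k \to \infty$ and then $\epsilon \to 0^+$ collapses the right-hand side of the first line in \eqref{dini} to $0$, so $\partial_t u_{\sup}(t) \le 0$ almost everywhere. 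The analogous computation on the minimizing sequence, where \eqref{omori-inf} gives $-\Delta_g u(p'_k) > 1/k$ and hence $\partial_t u(p'_k) > a(p'_k)/k > 0$, yields $\partial_t u_{\inf}(t) \ge 0$ almost everywhere.

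To close the argument I would integrate in time. The initial condition $u|_{t=0} = 0$ forces $u_{\sup}(0) = u_{\inf}(0) = 0$, so by the fundamental theorem of calculus for the absolutely continuous envelopes the sign information above yields $u_{\sup}(t) \le 0 \le u_{\inf}(t)$ for every $t \in [0,T]$. Combined with the trivial pointwise inequality $u_{\inf}(t) \le u_{\sup}(t)$, both envelopes vanish identically and hence $u \equiv 0$.

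The step that requires the most care is translating the PDE into a one-sided derivative bound on the envelopes: one has to verify that Omori--Yau really applies at each time slice (guaranteed by the uniform boundedness $\|u(\cdot,s)\|_\infty \le \|u\|_{2,\alpha}$ coming from $C^{2,\alpha}$-regularity) and that the nested limits $\limsup_k$ then $\epsilon \to 0^+$ appearing in Proposition~\ref{envelope} preserve the desired sign of the bound. Note that only positivity and upper-boundedness of $a$ are actually used here; the strict lower bound $a \ge \delta > 0$ from the hypothesis plays no role in uniqueness and will instead matter later for parabolicity in the existence/parametrix construction.
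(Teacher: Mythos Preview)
Your proof is correct and follows essentially the same route as the paper's: both arguments invoke Proposition~\ref{envelope}, substitute $\partial_t u = -a\Delta_g u$ into the Dini inequalities \eqref{dini}, bound the right-hand sides using the Omori--Yau estimates \eqref{omori-sup} and \eqref{omori-inf} together with boundedness of $a$, and then conclude from the monotonicity of the envelopes and the vanishing initial data. Your closing remark that the lower bound $a\ge\delta>0$ is never used in this uniqueness argument is a correct and worthwhile observation not made explicit in the paper.
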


\begin{proof}
Since $u\in C^{2,\alpha}(M\times[0,T])$ it is, in particular, bounded for every $t$ meaning that $u(\_,t)$ is bounded.
Therefore we can find Omori-Yau maximizing and minimizing sequences $(p_k(t))$ and $(p'_k(t))$ satisfying \eqref{omori-sup} and \eqref{omori-inf}.
	Combining the first inequality in Proposition \ref{envelope} and \eqref{omori-sup}, it follows that
	\begin{equation*}
		\dfrac{\partial}{\partial t}u_{\sup}(t) \leq \lim_{\epsilon \to 0} 
		\left( \limsup_{k\to \infty} \frac{a(p_{k}(t+\epsilon),t+\epsilon)}{k}\right) \leq 0.
	\end{equation*}
	Analogously, by combining the second inequality in Proposition \ref{envelope} and \eqref{omori-inf}, we get 
	\begin{equation*}
		\dfrac{\partial}{\partial t}u_{\inf}(t) \geq \lim_{\epsilon \to 0} 
		\left( \liminf_{k\to \infty} \frac{-a(p_{k}(t+\epsilon),t+\epsilon)}{k}\right) \geq 0.
	\end{equation*}
	This means that the infimum of the function $u$ over $M$ is non-decreasing in time, while the supremum of the function $u$ over $M$ is non-increasing in time; since $u=0$ at time $t=0$, follows directly that $u=0$ on $M\times [0,T]$.
\end{proof}
The above result allows us to prove uniqueness of solutions to homogeneous and non-homogeneous linear heat-type Cauchy problems with variable coefficients.
\begin{cor}\label{UniquenessOfAny}
Denote by $P$ the heat-type operator $P=\partial_t+a\Delta_g$. 
If $u,v\in C^{2,\alpha}(M\times[0,T])$ are such that $Pu=Pv$ with $u\big|_{t=0}=v\big|_{t=0}$ then $u=v$.
\end{cor}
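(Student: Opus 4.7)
The plan is immediate: reduce the statement to Theorem \ref{max-princ-thm} via linearity. Set $w := u - v$. Since $C^{2,\alpha}(M\times[0,T])$ is a vector space (it is a Banach space with the $(2,\alpha)$-norm given in Definition \ref{ClassicHolder}), we have $w \in C^{2,\alpha}(M\times[0,T])$. The operator $P = \partial_t + a\Delta_g$ is $\mathbb{R}$-linear (multiplication by the coefficient $a$ does not interfere with linearity, even though $a$ is only a function rather than a constant), so
\begin{equation*}
Pw = Pu - Pv = 0,
\end{equation*}
and the initial condition $u|_{t=0} = v|_{t=0}$ yields $w|_{t=0} = 0$.

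Next I would verify that the hypotheses of Theorem \ref{max-princ-thm} are still in force: the manifold $(M,g)$ is stochastically complete (this is assumed in Corollary \ref{UniquenessOfAny} since the corollary sits in the same setting as the theorem), and the coefficient $a$ is the same bounded function, bounded away from zero, that the theorem required. Thus Theorem \ref{max-princ-thm} applies directly to $w$ and gives $w \equiv 0$ on $M \times [0,T]$, which is the desired equality $u = v$.

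There is essentially no obstacle here; the only minor point worth flagging is confirming that one may indeed apply the theorem to $w$ without further hypotheses on $u$ and $v$ individually. This follows because neither the stochastic completeness of $(M,g)$ nor the assumptions on $a$ depend on the particular solution, and the $C^{2,\alpha}$ regularity passes to the difference. Accordingly, the proof is a two-line argument: form $w = u-v$, invoke linearity of $P$ to get a homogeneous Cauchy problem with zero initial data, and apply Theorem \ref{max-princ-thm} to conclude $w = 0$.
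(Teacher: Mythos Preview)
Your proposal is correct and follows exactly the same approach as the paper: set $w=u-v$, use linearity of $P$ to obtain the homogeneous Cauchy problem $(\partial_t+a\Delta_g)w=0$ with $w|_{t=0}=0$, and apply Theorem~\ref{maximumprinciple} to conclude $w=0$. The paper's own proof is the same two-line argument, just with the difference named $h$ instead of $w$.
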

\begin{proof}
Note that $P$ is linear therefore, by setting $h=u-v$ we see that $h$ satisfies the Cauchy problem $(\partial_t+a\Delta_g)h=0$ with $h\big|_{t=0}=u\big|_{t=0}-v\big|_{t=0}=0$.
The above result implies $h=0$ resulting in $u=v$.
\end{proof}

\section{Parametrix construction for heat-type equations}\label{ParametrixSection}

We will now leave the more general setting of stochastically complete manifolds and move to the manifolds we are interested in, that is $\Phi$-manifolds.

For a given $\Phi$-manifold $(M,g_{\Phi})$, the heat-kernel operator $\mathbf{H}$ represents an inverse of the heat operator $(\partial_t+{\Delta})$.
Recall that here $\Delta$ denotes the unique self-adjoint extension of the Laplace-Beltrami operator associated to the $\Phi$-metric $g_\Phi$.
This means that, given some function $\ell\in x^\gamma C^{k,\alpha}_{\Phi}(M\times[0,T])$, $u=\mathbf{H}(\ell)$ is a solution of the Cauchy problem
\begin{equation} \label{CPE1}
   \left(\partial_t+{\Delta_{\Phi}}\right)u=\ell,\;\;u|_{t=0}=0. 
\end{equation}
The aim of this section is to get a similar result for heat-type operators
\begin{equation}\label{HeatTypeOp}
P:=\partial_t+a{\Delta_{\Phi}},
\end{equation}
where $a$ is a function on $\overline{M}\times[0,T]$.
Although not explicitly expressed here, the function $a$ will be subject to some restrictions (see Theorem \ref{theorem2}).

This will be accomplished by firstly constructing an approximate inverse, i.e. a parametrix, for the operator $P$.
By looking at heat-type operators as in \eqref{HeatTypeOp}, it is clear that the parametrix will be constructed by means of the standard heat-kernel operator $\mathbf{H}$.
Hence, by looking at Theorem \ref{MappingPropertiesTHM} one might expect to find "well behaved" parametrix for heat-type operators between the weighted H\"{o}lder spaces introduced in \S \ref{HoldPhiSect}.

\medskip
A parametrix for heat-type operators allows us to prove short-time existence of solutions to the following Cauchy problems
\begin{align}
Pu&=\left(\partial_t+a{\Delta}\right)u=\ell,\;\;u|_{t=0}=0,\\
Pu&=\left(\partial_t+a{\Delta}\right)u=0,\;\; u|_{t=0}=u_0,
\end{align}
for some functions $\ell:\overline{M}\times[0,T]\rightarrow\mathbb{R}$ and $u_0:\overline{M}\rightarrow\mathbb{R}$ respectively.
These last two statements are the core of Theorem \ref{theorem2} which we recall here for convenience of the reader.
\begin{thm}
Let $\beta$ be in $(0,1)$ and consider a positive function $a$ in $C^{k,\beta}_{\Phi}(M\times[0,T])$ to so that it is bounded from below away from zero.  There exist two operators $\mathbf{Q}$ and $\mathbf{E}$ so that, for every $\alpha\in (0,1)$, $\alpha<\beta$ and for every $\gamma\in \mathbb{R}$,
\begin{align*}
	&\mathbf{Q}:x^{\gamma}C^{k,\alpha}_{\Phi}(M\times [0,T])\rightarrow x^{\gamma}C^{k+2,\alpha}_{\Phi}(M\times [0,T]), \\
	&\mathbf{E}:x^{\gamma}C^{k,\alpha}_{\Phi}(M)\rightarrow x^{\gamma}C^{k+2,\alpha}_{\Phi}(M\times [0,T]),
\end{align*}
are both bounded.
Furthermore, for $\ell\in x^\gamma C^{k,\alpha}(M\times[0,T])$ and $u_0\in C^{k,\alpha}(M)$, $\mathbf{Q}\ell$ and $\mathbf{E}u_0$ are solutions of the Cauchy problems
\begin{align}
	\mbox{\textbf{(i)}}\, Pu = \ell; \;	u|_{t=0} = 0 \;\;	\mbox{and} \;\; \mbox{\textbf{(ii)}}\, Pu = 0;  \; u|_{t=0} = u_0
\end{align}
respectively.
\end{thm}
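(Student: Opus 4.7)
The plan is to construct $\mathbf{Q}$ as a right parametrix for the heat-type operator $P=\partial_t+a\Delta_\Phi$ built from the standard heat-kernel operator $\mathbf{H}$ of Theorem~\ref{MappingPropertiesTHM}, and then to obtain $\mathbf{E}$ by correcting the free heat evolution of $u_0$. Uniqueness of the resulting solutions will be immediate from Corollary~\ref{UniquenessOfAny}. Concretely, I first take $\mathbf{Q}_0:=\mathbf{H}$, which by Theorem~\ref{MappingPropertiesTHM} is bounded $x^\gamma C^\alpha_\Phi\to x^\gamma C^{2,\alpha}_\Phi$ with $\mathbf{Q}_0\ell\big|_{t=0}=0$. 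Since $(\partial_t+\Delta_\Phi)\mathbf{H}=\mathrm{Id}$, a direct computation gives
\begin{equation*}
P\mathbf{Q}_0 = \mathrm{Id} + R_0, \qquad R_0\ell := (a-1)\,\Delta_\Phi\mathbf{H}\ell.
\end{equation*}
The assumption $a\in C^\beta_\Phi$ with $\alpha<\beta$, together with standard multiplier estimates on the Hölder scale, makes multiplication by $(a-1)$ a bounded endomorphism of $x^\gamma C^\alpha_\Phi$; combined once more with Theorem~\ref{MappingPropertiesTHM} this shows $R_0\in\mathcal{L}(x^\gamma C^\alpha_\Phi)$. Provided $\mathrm{Id}+R_0$ is invertible, I define $\mathbf{Q}:=\mathbf{Q}_0(\mathrm{Id}+R_0)^{-1}$; by construction $P\mathbf{Q}=\mathrm{Id}$, and the advertised mapping property of $\mathbf{Q}$ follows from that of $\mathbf{Q}_0$.

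The hard part of the argument is precisely this invertibility, since $(a-1)$ is not small in any obvious sense and so a naive Neumann series is hopeless. My strategy is to exploit the two finer mapping estimates of $\mathbf{H}$ in Theorem~\ref{MappingPropertiesTHM}, which produce prefactors $\sqrt{t}$ (gaining only one spatial derivative) and $t^{\alpha/2}$ (gaining two derivatives at the cost of Hölder control on $M$). Interpolating between these sharper statements and the full $C^{2,\alpha}$-gain, one should obtain a short-time estimate of the form $\|R_0\|_{\mathrm{op}}\le C\,T_0^{\varepsilon}$ on a slab $M\times[0,T_0]$, with $\varepsilon=\varepsilon(\alpha,\beta)>0$ governed by the regularity gap $\beta-\alpha$. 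Taking $T_0$ small enough forces $\|R_0\|_{\mathrm{op}}<1$, so the Neumann series $\sum_{k\ge 0}(-R_0)^k$ converges in the operator norm and produces $\mathbf{Q}$ on $[0,T_0]$. The full interval $[0,T]$ is then reached by partitioning it into finitely many slabs of length $\le T_0$ and gluing, using the solution at the right endpoint of each block as initial datum for the next sub-problem.

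For the homogeneous Cauchy problem I would first introduce the free heat evolution
\begin{equation*}
w_0(p,t):=\int_M H(t,p,\tilde p)\,u_0(\tilde p)\,\dvol_{\Phi}(\tilde p),
\end{equation*}
which satisfies $(\partial_t+\Delta_\Phi)w_0=0$ with $w_0|_{t=0}=u_0$ and which lives in $x^\gamma C^{2,\alpha}_\Phi(M\times[0,T])$ by a propagator version of Theorem~\ref{MappingPropertiesTHM}. A one-line computation yields $Pw_0=(a-1)\Delta_\Phi w_0$, hence the correction
\begin{equation*}
\mathbf{E}u_0 := w_0 - \mathbf{Q}\bigl((a-1)\Delta_\Phi w_0\bigr)
\end{equation*}
satisfies $P(\mathbf{E}u_0)=0$ and $\mathbf{E}u_0|_{t=0}=u_0$. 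Boundedness of $\mathbf{E}:x^\gamma C^\alpha_\Phi(M)\to x^\gamma C^{2,\alpha}_\Phi(M\times[0,T])$ then follows by chaining the mapping properties of the propagator and of $\mathbf{Q}$, and Corollary~\ref{UniquenessOfAny} provides uniqueness of both $\mathbf{Q}\ell$ and $\mathbf{E}u_0$ within the $C^{2,\alpha}$ class, completing the argument.
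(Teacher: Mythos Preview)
Your argument has a genuine gap at precisely the point you flag as ``the hard part'': the claimed short-time estimate $\|R_0\|_{\mathrm{op}}\le C\,T_0^{\varepsilon}$ for $R_0=(a-1)\Delta_\Phi\mathbf{H}$ on $x^\gamma C^\alpha_\Phi$ does not follow from the three mapping properties in Theorem~\ref{MappingPropertiesTHM}, and in fact fails. Expanding the $\alpha$-norm of $R_0\ell$ gives a term $\|a-1\|_\infty\,[\Delta_\Phi\mathbf{H}\ell]_\alpha$. The third mapping property $\mathbf{H}:C^\alpha_\Phi\to t^{\alpha/2}C^2_\Phi$ controls only the \emph{sup}-norm $\|\Delta_\Phi\mathbf{H}\ell\|_\infty$ with a prefactor $t^{\alpha/2}$; the H\"older seminorm $[\Delta_\Phi\mathbf{H}\ell]_\alpha$ is bounded only by the first mapping property, with no time decay. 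Since $(a-1)$ is a fixed function with no global smallness, this term contributes $C\|a-1\|_\infty\|\ell\|_\alpha$ to $\|R_0\ell\|_\alpha$ uniformly in $T$, and no interpolation with the $\sqrt{t}\,C^{1,\alpha}$ estimate improves this: you would land in $C^{\alpha'}$ for some $\alpha'<\alpha$, and multiplication by $a\in C^\beta_\Phi$ does not recover the lost regularity. The regularity gap $\beta-\alpha$ plays no role in your setup.

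The paper's construction is designed exactly to manufacture the missing smallness. One localizes near $\partial\overline{M}$ with carefully scaled cut-offs $\varphi_{i,\overline p},\psi_{i,\overline p}$ of diameter $O(\varepsilon)$ and freezes the coefficient at $(\overline p,0)$, replacing $(a-1)$ by $(a-a(\overline p,0))$ on each patch. Then $\|a-a(\overline p,0)\|_\infty\le C(\varepsilon^\beta+T^{\beta/2})$ by H\"older continuity of $a$, and this smallness absorbs both the non-decaying $[\Delta_\Phi\mathbf{H}(\varphi\ell)]_\alpha$ and the $\varepsilon^{-\alpha}$ blow-up from the partition of unity, yielding $\|R^1\|_{\mathrm{op}}\le C(T^{\alpha/2}+\varepsilon^\beta)\varepsilon^{-\alpha}$; here the hypothesis $\beta>\alpha$ is essential to make $\varepsilon^{\beta-\alpha}$ small. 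An interior parametrix on the doubled compact manifold handles the complement, commutator terms go to zero with $T$ by Lemma~\ref{TechincalLemma}, and only then does the Neumann series converge. Your global ansatz $\mathbf{Q}_0=\mathbf{H}$ skips this localization and therefore cannot produce an invertible $\mathrm{Id}+R_0$.
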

The construction of a parametrix will be split in two steps: a boundary parametrix and an interior parametrix.
A combination of those will then give rise to a parametrix for heat-type operators.
A boundary parametrix will be constructed in \S \ref{BoundaryParametrixSec}. 
Our construction follows along the same steps of the boundary parametrix in \cite{bahuaud2019long}. 
It is a technical construction since it requires a careful analysis near the boundary.
The construction of an interior parametrix, along with a parametrix for heat-type operators, will instead take place in \S \ref{InteriorParametrixSec}.
The interior parametrix will follow as a consequence of the standard analysis of parabolic PDE's on compact manifolds.
Proposition \ref{BoundariParametrixProp} will finally give us the parametrix of heat-type operators $P=\partial_t+a\Delta_\Phi$.
We will conclude this section with the proof of Theorem \ref{theorem2}.

\subsection{Boundary parametrix}\label{BoundaryParametrixSec}

As in \cite{bahuaud2019long}, the boundary parametrix will be constructed by localizing the problem in appropriate coordinate patches by making use of two partitions of unity.
Thus, we will firstly construct a localized parametrix, then by summing over the partition of unity, we get an approximate inverse of $P$ near the boundary.
The next Lemma explains the reason why the choice of partitions of unity, localized near the boundary, are useful for the purposes described at the beginning of this section. 
\begin{lem}\label{TechincalLemma}
Let ${(M,g_{\Phi})}$ be a $\Phi$-manifold and consider two functions $\varphi,\psi\in C^{\infty}(M)$ to be compactly supported.
Assume, furthermore, that $\varphi$ and $\psi$ lie in $C^{\alpha}_{\Phi}(M)$ (cf. \S \ref{HoldPhiSect}) and that $\psi$ is supported away from the boundary $\partial\overline{M}$ of $\overline{M}$. 
Let $\mathbf{H}$ be the heat-kernel operator described in \eqref{heatkerelop-def}.
Denote by $R^0$ the operator defined by $R^0=M(\psi)\circ \mathbf{H}\circ M(\varphi)$, i.e. $R^0 u=\psi \mathbf{H}(\varphi u)$.
Here $M(\psi)$ stands for the operator "multiplication by $\psi$".
For every non negative integer $k$, for every $\alpha\in (0,1)$ and $\gamma\in\mathbb{R}$, the operator $R^0$ acting between the weighted H\"{o}lder spaces  
$$R^0:x^\gamma C^{k,\alpha}_{\Phi}(M\times [0,T])\rightarrow\sqrt{t}x^\gamma\,C^{k+1,\alpha}_{\Phi}(M\times [0,T])$$
has operator norm $\|R^0\|_{\op}$ satisfying
$$\|R^0\|_{\op}\xrightarrow{T\rightarrow 0}0.$$
\end{lem}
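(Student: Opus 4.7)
My strategy is to combine the three mapping properties of $\mathbf{H}$ recalled in Theorem \ref{mappropH} with the vanishing of $R^0 u$ at $t=0$, and to extract a small factor in $T$ by interpolation. Since $\varphi,\psi\in C^\infty(M)\cap C^\alpha_\Phi(M)$ are compactly supported, the multiplication operators $M(\varphi)$ and $M(\psi)$ are continuous on every weighted space $x^\gamma C^{j,\alpha}_\Phi(M\times[0,T])$ with operator norm independent of $T$. Coupling this with the second line of Theorem \ref{mappropH} already yields
\[
\|R^0 u\|_{\sqrt{t}\,x^\gamma C^{k+1,\alpha}_\Phi(M\times[0,T])}\le C\,\|u\|_{k,\alpha,\gamma},
\]
so $R^0$ is at least a bounded operator between the asserted spaces uniformly in $T$. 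The content of the lemma is to upgrade $C$ to a quantity $\eta(T)\to 0$ as $T\to 0$.

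To produce this decay I use two further facts. First, the first line of Theorem \ref{mappropH} grants one extra order of spatial regularity, $\mathbf{H}(\varphi u)\in x^\gamma C^{k+2,\alpha}_\Phi(M\times[0,T])$, again with a bound uniform in $T$. Second, stochastic completeness of $(M,g_\Phi)$ (see \S\ref{stoc-comp-subsect}) gives $\int_M H(t-s,p,\tilde p)\,\dvol_\Phi(\tilde p)\le 1$, so that a direct Duhamel estimate produces
\[
\|R^0 u\|_\infty\le\|\psi\|_\infty\|\varphi\|_\infty\,T\,\|u\|_\infty\le C'\,T\,\|u\|_{k,\alpha,\gamma}.
\]
Interpolating between the uniform $x^\gamma C^{k+2,\alpha}_\Phi$-bound and this $T$-decaying sup-norm bound yields an estimate of the form $\|R^0 u\|_{\sqrt{t}\,x^\gamma C^{k+1,\alpha}_\Phi(M\times[0,T])}\le C''\,T^\sigma\,\|u\|_{k,\alpha,\gamma}$ for some $\sigma>0$, from which $\|R^0\|_{\op}\to 0$ as $T\to 0$.

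The main technical obstacle lies in carrying out this interpolation consistently within the anisotropic, $\sqrt{t}$-weighted parabolic H\"older norm: each of the semi-norms entering the target norm of $R^0u/\sqrt{t}$ has to be shown to acquire a genuine positive power of $T$. Two features of the setup make this possible. The Duhamel estimate above shows that $R^0u$ vanishes to order $t$ (rather than merely $\sqrt{t}$) at $t=0$, so that division by $\sqrt{t}$ is harmless and in fact produces an additional factor of $\sqrt{t}$ of decay; and the smoothness and compact support of $\psi$ and $\varphi$ guarantee that $M(\psi)$ and $M(\varphi)$ enter every estimate only through fixed constants, hence do not interfere with the interpolation. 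Once the interpolation is verified term by term, the bound $\|R^0\|_{\op}=O(T^\sigma)$ follows and the claimed convergence is immediate.
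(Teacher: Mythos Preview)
Your approach is genuinely different from the paper's, and as written it has a real gap.

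The paper does not use mapping properties of $\mathbf H$ as a black box at all. It analyses the Schwartz kernel $\psi(p)\,H(t-\widetilde t,p,\widetilde p)\,\varphi(\widetilde p)$ directly on the blown--up heat space $M^2_h$ from \cite{paper1} and \cite{VerTal}. The decisive input is precisely the hypothesis you never invoke: since $\psi$ is supported \emph{away} from $\partial\overline M$, the lift $\beta^*(\psi H\varphi)$ is compactly supported away from every spatial boundary face $\ff,\fd,\lf,\rf$, so only the temporal--diagonal face $\td$ survives. There the heat kernel has the model form $\tau^{-m}G_1$ with $G_1$ rapidly decaying in the rescaled variables, and a direct estimate in projective coordinates (parallel to the proof of Theorem~\ref{mappropH}) gives $\|R^0u\|_{1,\alpha}\le c\sqrt{T}\,\|u\|_\alpha$. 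No interpolation enters.

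In your argument, by contrast, the entire content is pushed into the interpolation step, which you identify as ``the main technical obstacle'' and then do not perform: you assert that ``once the interpolation is verified term by term'' the conclusion follows. But an inequality of the type
\[
\|v\|_{\sqrt{t}\,x^\gamma C^{k+1,\alpha}_\Phi}\;\le\;C\,\|v\|_\infty^{\theta}\,\|v\|_{x^\gamma C^{k+2,\alpha}_\Phi}^{\,1-\theta}
\]
with $C$ independent of $T$ is nowhere established in the paper, and in the anisotropic, $\sqrt t$--weighted, $\Phi$--H\"older setting it is far from automatic. The time--H\"older seminorms of $R^0u/\sqrt t$ in particular are not controlled by your two endpoint bounds. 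So the proof, as it stands, is a sketch with the hard part missing.

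There is also a structural warning sign: nothing in your argument uses that $\psi$ is supported away from $\partial\overline M$; if it worked verbatim it would prove the lemma for arbitrary compactly supported $\psi$. In fact that hypothesis is exactly what would make your route viable: it forces $R^0u$ to be supported in a fixed compact subset of the interior, where the $\Phi$--H\"older norms coincide with the classical parabolic ones and standard interpolation inequalities are available with constants independent of $T$. If you want to salvage the interpolation approach, that localisation is the missing ingredient you must make explicit.
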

\begin{proof}
With the same argument employed in the proof of \cite[Theorem 1]{paper1}, it is enough to prove the result for $k=0$.
It is important to point out that the operator $R^0$ acts as a convolution, i.e. for $u\in x^\gamma C^\alpha(M\times[0,T])$,
$$R^0u(p,t)=\int_0^t\int_M \psi (p)H(t-\widetilde{t},p,\widetilde{p})\varphi(\widetilde{p})u(\widetilde{p},\widetilde{t})\dvol_{\Phi}(\widetilde{p})\di \widetilde{t},$$
with $H$ being the heat-kernel whose asymptotics have been discussed in \cite[\S 5]{paper1}.
For simplicity we will denote the kernel of the operator $R^0$ just by $\psi H\varphi$.

\medskip
Since $\psi$ is supported away from the boundary $\partial\overline{M}$ of $\overline{M}$, the lift of $\psi H \varphi$ to the heat space ${{M}^2_h}$ is (compactly) supported away from ${\ff}$, ${\fd}$, ${\lf}$ and ${\rf}$ (see \cite[\S 4]{paper1}).
Therefore, according to \cite[\S 5]{paper1}, we conclude that the asymptotic behavior of $\psi H\varphi$ is given by the asymptotic of the operator $H$ near $\td$, that is
$$\beta^*(\psi H\varphi)\sim \tau^{-m}G_{1},$$
where $G_1$ is a bounded function vanishing to infinite order as $|(\mathcal{S},\mathcal{U},\mathcal{Z})|\rightarrow\infty$.

\medskip
In \cite[Theorem 6.1 and Theorem 6.2]{paper1} we have proven similar estimates for the heat-kernel operator $\mathbf{H}$. 
In that casae we have made use of the fact that the heat-kernel $H$ is "stochastically complete", meaning that it integrates to $1$.
Unfortunately, this is not the case here due to the presence of the functions $\psi$ and $\varphi$.
But estimating in projective coordinates and the above observation allow us to prove the claimed mapping properties.
In conclusion
\begin{align*}
\|R^0\|_{\op}=&\sup_{\|u\|_\alpha=1}\|R^0u\|_{1,\alpha}=\sup_{\|u\|_\alpha=1}\|R^0 u\|_\alpha +\sup_{\substack{\|u\|_\alpha=1\\ X\in\mathcal{V}_\Phi}}\|X(R^0u)\|_\alpha\le c\sqrt{t}.
\end{align*}
The above estimate implies the result since, for $T\rightarrow 0$, $\sqrt{t}\rightarrow 0$.
\end{proof}
We can now construct the specific partition of unity.

\subsubsection{\textbf{Partitions of unity}}\label{PartitionSec}

Let us fix some $\mathfrak{R}>0$ and consider the collar neighborhood $U_{\mathfrak{R}}=\left\{p\in \overline{M}\,|\, x(p)\le \mathfrak{R}\right\}$ of $\partial \overline{M}$ in $\overline{M}$.
Furthermore, for $d>0$ let us define the family of half-cubes
$$B(d)=[0,d)\times (-d,d)^b\times(-d,d)^f\subset \mathbb{R}_{\ge 0}\times \mathbb{R}^b\times\mathbb{R}^f,$$
where $b$ and $f$ denote the dimension of the closed manifolds $Y$ and $Z$ respectively.
Since $\overline{M}$ is a compact manifolds with boundary, every point $\overline{p}\in\partial \overline{M}$ admits some coordinate chart $\phi:B(1)\rightarrow A$, where $\overline{p} \in A$.
Moreover, due to compactness of $\partial \overline{M}$, we can consider finitely many charts $\{\overline{p}_i,\phi_i:B(1)\rightarrow A_i\}$ where the $\overline{p_i}$'s are points on the boundary $\partial \overline{M}$.
By choosing $\mathfrak{R}$ sufficiently small, the finite family $(A_i)_i$ will cover the whole collar neighborhood $U_{\mathfrak{R}}$.
Such a covering can be extended to a covering of the whole manifold $\overline{M}$ by considering an additional open set $A_0=\{p\in\overline{M}\,|\,x(p)>\mathfrak{R}/2\}$.   
\medskip

We will now define bump functions supported on the finite family of open neighborhoods of the points $\overline{p_i}\in\partial \overline{M}$.
We begin by setting $\sigma:\mathbb{R}_{\ge 0}\rightarrow\mathbb{R}$ to be a compactly supported function so that $\sigma(x)\le 1$, with $\sigma(x)=1$ for $x\in [0,1/2]$ and $\sigma(x)=0$ for $x\geq  1$.
Employing the Mean Value theorem, it is easy to see that $\sigma$ lies in $C^{k,\alpha}(\mathbb{R}_{\ge 0})$ for every $k\ge 0$ and for every $\alpha\in (0,1]$.
\begin{rmk}
The H\"{o}lder space $C^{k,\alpha}(\mathbb{R}_{\ge 0})$ above denotes the classical H\"{o}lder space, for which the H\"{o}lder bracket is defined by 
$$[\sigma]_{\alpha}=\sup_{x,x'\in\mathbb{R}_{\ge 0}}\frac{|\sigma(x)-\sigma(x')|}{|x-x'|^\alpha}.$$
\end{rmk}
Since it will play an important role later, we will stress here what happens to the function $\sigma$ after rescailing.
That is, if we consider some fixed number $\varepsilon\in (0,1)$ then we want to see what is the $\alpha$-H\"{o}lder  semi-norm of the function $\sigma\left(\frac{x}{\varepsilon}\right)$.
Since $\sigma$ lies in $C^\alpha(\mathbb{R}_{\ge 0})$ by definition then one readily sees that for every $x,x'\in \mathbb{R}_{\ge 0}$,
\begin{align*}
\left|\sigma\left(\frac{x}{\varepsilon}\right)-{\sigma}\left(\frac{x'}{\varepsilon}\right)\right|\le C\left|\frac{x}{\varepsilon}-\frac{x'}{\varepsilon}\right|^\alpha=C\varepsilon^{-\alpha}|x-x'|^\alpha,
\end{align*}  
thus implying that $\left[\sigma\left(\frac{x}{\varepsilon}\right)\right]_{\alpha}\le C\varepsilon^{-\alpha}$.
\medskip

Before proceeding with the definition of the bump functions, we need an intermediate result.
As it has been already done in \S \ref{PhiMfldsSect}, we will use the short hand notation $y$ and $z$ for $(y^1,\dots,y^b)$ and $(z^1,\dots, z^f)$ respectively.
\begin{lem}\label{DistanceEqLemma}
Let $(M,g_{\Phi})$ be a $\Phi$-manifold.
For every $q\in [1,\infty)$, the following distances on $M$ are equivalent:
\begin{equation*}
\begin{split}
\di_{q,\Phi}(p,p')&=\left(|x-x'|^q+(x+x')^q\|y-y'\|^q+(x+x')^2q\|z-z'\|^q\right)^{1/q},\\
\di_{\infty,\Phi}(p,p')&=\max\{|x-x'|,(x+x')\|y-y'\|,(x+x')^2\|z-z'\|\},
\end{split}
\end{equation*}
Here, by equivalent, we mean that for every $q,q'\in[1,\infty]$ there exist constants $c,C>0$ so that for every $p,p'\in M$, $$c\di_{q',\Phi}(p,p')\le \di_{q,\Phi}(p,p')\le C\di_{q',\Phi}(p,p').$$
\end{lem}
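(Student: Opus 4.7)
The plan is to reduce the claim to the well-known equivalence of $\ell^q$ norms on a finite-dimensional vector space. For any two points $p = (x,y,z)$ and $p' = (x',y',z')$ lying in a common coordinate chart near the boundary, I would introduce the three non-negative scalars
\[ A := |x-x'|, \quad B := (x+x')\|y-y'\|, \quad C := (x+x')^{2}\|z-z'\|. \]
With this notation, $d_{q,\Phi}(p,p') = (A^{q} + B^{q} + C^{q})^{1/q}$ is exactly the $\ell^{q}$ norm of the vector $(A,B,C) \in \mathbb{R}^{3}$, and $d_{\infty,\Phi}(p,p') = \max\{A,B,C\}$ is its $\ell^{\infty}$ norm. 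Note that here I am reading the factor $(x+x')^{2q}$ in the statement as $((x+x')^{2})^{q}$, so that the third slot fits the $\ell^{q}$ pattern.

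Having made this identification, I would invoke the classical inequality $\|v\|_{\infty} \le \|v\|_{q} \le 3^{1/q}\|v\|_{\infty}$, valid for every $v \in \mathbb{R}^{3}$ and every $q \in [1,\infty)$. Applied to $v = (A,B,C)$ this yields
\[ d_{\infty,\Phi}(p,p') \le d_{q,\Phi}(p,p') \le 3^{1/q}\, d_{\infty,\Phi}(p,p'), \]
with constants \emph{independent} of $p$ and $p'$. To then conclude the equivalence for two arbitrary exponents $q,q' \in [1,\infty]$, I would simply chain the above estimates through $d_{\infty,\Phi}$, producing constants $c = 3^{-1/q'}$ and $C = 3^{1/q}$ (with the convention $3^{1/\infty} = 1$) such that $c\, d_{q',\Phi} \le d_{q,\Phi} \le C\, d_{q',\Phi}$ pointwise on $M \times M$ (inside a common chart).

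I do not anticipate any significant obstacle: the lemma is essentially a restatement of the equivalence of $\ell^{q}$ norms in finite dimensions, once the compound scalars $A,B,C$ are recognized as the three \emph{fixed} coordinates to which the norms are applied. The only point worth verifying explicitly is that the comparison constants are genuinely independent of $p,p'$, which is automatic because the ambient Euclidean space in which $(A,B,C)$ lives is three-dimensional regardless of the chart or the specific values of $x,y,z$. The endpoint cases $q = \infty$ or $q' = \infty$ require no separate argument beyond the convention $3^{1/\infty} = 1$, since $d_{\infty,\Phi}$ is already singled out in the statement.
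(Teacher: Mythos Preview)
Your proof is correct and follows essentially the same approach as the paper: both reduce the statement to the standard equivalence $\|v\|_{\infty}\le\|v\|_{q}\le 3^{1/q}\|v\|_{\infty}$ on $\mathbb{R}^{3}$, with the paper writing out the upper bound explicitly rather than invoking the norm inequality by name. Your reading of the exponent as $((x+x')^{2})^{q}$ is the intended one.
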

\begin{proof}
Notice that it is enough to prove that for a given $q\in [1,\infty)$, there exist constants $c,C>0$ so that 
$$c\di_{\infty,\Phi}(p,p')\le \di_{q,\Phi}(p,p')\le C \di_{\infty,\Phi}(p,p')$$
for every $p,p'\in M$.
Indeed one can use the transitive property to gain the other inequalities.  Thus, let us consider $q\in [1,\infty)$. 
For given $p,p'\in M$, it is straightforward that $\di_{\infty,\Phi}(p,p')\le \di_{q,\Phi}(p,p')$. 
The other inequality follows by arguing as follows:
\begin{align*}
    \di_{q,\Phi}(p,p')&\le \left(\di_{\infty,\Phi}(p,p')^q+\di_{\infty,\Phi}(p,p')^q+\di_{\infty,\Phi}(p,p')^q\right)^{1/q}\\
    &=3^{1/q}\di_{\infty,\Phi}(p,p').
\end{align*}
\end{proof}
We are now in the position to define the appropriate bump functions.
Let $\overline{p}\in\partial \overline{M}$ be fixed.
From the definition of the open covering, defined above, there exists some $\phi_i:B(1)\rightarrow A_i$ so that $\phi_i(\overline{p})=(0,\overline{y},\overline{z})$ for some $\overline{y}\in (-1,1)^b$ and $\overline{z}\in (-1,1)^f$.
\begin{prop}\label{PartitionOfUnityProp}
Let $\varepsilon\in (0,1)$ be fixed.
For $p \in A_{i}$, with $\phi^{-1}_{i}(p) = (x,y,z)$, consider the functions $\widehat{\psi}_{i,\overline{p}},\widehat{\varphi}_{i,\overline{p}}:A_i\rightarrow\mathbb{R}$ defined by
\begin{align*}
\widehat{\varphi}_{i,\overline{p}}(p)=&\sigma\left(\frac{x}{\varepsilon}\right)\sigma(x\|y-\overline{y}\|)\sigma(\varepsilon x^2\|z-\overline{z}\|),\\
\widehat{\psi}_{i,\overline{p}}(p)=&\sigma\left(\frac{x}{2\varepsilon}\right)\sigma\left(\frac{x\|y-\overline{y}\|}{2}\right)\sigma\left(\frac{\varepsilon x^2\|z-\overline{z}\|}{2}\right).
\end{align*}
Then $\widehat{\varphi}_{i,\overline{p}}$ and $\widehat{\psi}_{i,\overline{p}}$ satisfy: 
\begin{itemize}
\item[I.] $\widehat{\psi}_{i,\overline{p}}\equiv 1$ on the support of $\widehat{\varphi}_{i,\overline{p}}$. 
\item[II.] There exists constants (all of which will be denoted by $C$) so that  $[\widehat{\psi}_{i,\overline{p}}]_{\alpha}\le C\varepsilon^{-\alpha}$, $[\widehat{\varphi}_{i,\overline{p}}]_{\alpha}\le C\varepsilon^{-\alpha}$.
\item[III.]$\widehat{\varphi}_{i,\overline{p}},\widehat{\psi}_{i,\overline{p}}\in C^{2,\alpha}_{\Phi}(M)$ (see \S \ref{HoldPhiSect} for the definition of H\"{o}lder spaces on $\Phi$-manifolds).
\item[IV.] There exists some constant $\overline{C}>0$ (depending solely on the dimension of $Y$ and $Z$) so that $\diam\left(\supp(\widehat{\varphi}_{{i,\overline{p}}})\right)\le \overline{C}\varepsilon$.
Here by $\diam$ we mean the diameter, that is 
$$\diam\left(\supp(\widehat{\varphi}_{{i,\overline{p}}})\right)=\max_{p,p'\in \supp\left(\widehat{\varphi}_{{i,\overline{p}}}\right)}\di_{\Phi}(p,p').$$
\end{itemize} 
Note that in the above we did not specify whit respect to which of the distances on $\overline{M}$ is the diameter considered since, by Lemma \ref{DistanceEqLemma} they are all equivalent.
\end{prop}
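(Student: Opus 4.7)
The plan is to verify the four claims (I)--(IV) by direct calculation, exploiting the product structure of the bump functions together with the scaling behaviour of $\sigma$ and the $\Phi$-geometry. Property (I) is the easiest: on the support of $\widehat{\varphi}_{i,\overline{p}}$, each of the three arguments $x/\varepsilon$, $x\|y-\overline{y}\|$, $\varepsilon x^{2}\|z-\overline{z}\|$ lies in $[0,1]$ by definition of $\supp \sigma$, so the corresponding arguments in $\widehat{\psi}_{i,\overline{p}}$ (each divided by $2$) lie in $[0,1/2]$, where $\sigma\equiv 1$. For (IV), I would base the triangle inequality at $\overline{p}=(0,\overline{y},\overline{z})$. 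Since $d_{\Phi}(p,\overline{p})^{2} = x^{2}+x^{2}\|y-\overline{y}\|^{2}+x^{4}\|z-\overline{z}\|^{2}$, the three summands are controlled by the three arguments of $\sigma$ (up to the $\varepsilon$-scalings), which are $O(\varepsilon^{2})$ on $\supp(\widehat{\varphi}_{i,\overline{p}})$; so each point on the support lies in the $\Phi$-ball of radius $O(\varepsilon)$ around $\overline{p}$, and Lemma \ref{DistanceEqLemma} allows one to replace $d_{\Phi}$ by any equivalent distance.

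For (II), I would apply the standard product rule for H\"older semi-norms,
\begin{equation*}
[fgh]_{\alpha} \le [f]_{\alpha}\|g\|_{\infty}\|h\|_{\infty} + \|f\|_{\infty}[g]_{\alpha}\|h\|_{\infty} + \|f\|_{\infty}\|g\|_{\infty}[h]_{\alpha},
\end{equation*}
so it suffices to estimate the $\Phi$-H\"older semi-norm of each factor $\sigma(F)$ separately. For each such factor I would use $|\sigma(F(p))-\sigma(F(p'))|\le [\sigma]_{\alpha}|F(p)-F(p')|^{\alpha}$ together with the bound $|F(p)-F(p')|\le L\, d_{\Phi}(p,p')$ obtained by evaluating the $\Phi$-vector fields $x^{2}\partial_{x},x\partial_{y_{j}},\partial_{z_{k}}$ on $F$. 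A direct calculation shows that for each of the three arguments these derivatives are bounded on the (slightly larger) support of $\widehat{\psi}_{i,\overline{p}}$; interpolating the resulting Lipschitz estimate against the trivial $L^{\infty}$ bound then yields a H\"older semi-norm dominated by the factor $\sigma(x/\varepsilon)$, namely $[\sigma(x/\varepsilon)]_{\alpha}\le C\varepsilon^{-\alpha}$, while the remaining two factors contribute at most $O(1)$. The same argument applied to $\widehat{\psi}_{i,\overline{p}}$ gives the analogous estimate.

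Property (III) then follows inductively: by the chain and product rules, applying any composition of $\Phi$-vector fields of order $\le 2$ to $\widehat{\varphi}_{i,\overline{p}}$ (or $\widehat{\psi}_{i,\overline{p}}$) produces a finite sum of products involving derivatives $\sigma^{(j)}$ with $j\le 2$ (which are in $C^{\alpha}$ by assumption on $\sigma$), smooth coefficients bounded on the support, and powers of $x$, $(y-\overline{y})/\|y-\overline{y}\|$, $(z-\overline{z})/\|z-\overline{z}\|$; the same $\Phi$-gradient computation as in (II) shows each such term is $\Phi$-H\"older continuous with a controlled semi-norm. I expect the main technical obstacle to be (II): one must carefully track how the non-smoothness of $\|\cdot-\overline{y}\|$ and $\|\cdot-\overline{z}\|$ at the origin interacts with the $\Phi$-vector fields, and verify that the compensating powers of $x$ in $x\partial_{y_{j}}$ and in the arguments $x\|y-\overline{y}\|$, $\varepsilon x^{2}\|z-\overline{z}\|$ are enough to keep the $\Phi$-gradients bounded uniformly on $\supp(\widehat{\psi}_{i,\overline{p}})$, independently of where $y$ or $z$ sit relative to $\overline{y}$ or $\overline{z}$.
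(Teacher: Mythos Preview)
Your plan is correct and would yield the proposition, but it takes a more analytic route than the paper's own proof, and in one place your stated rationale is slightly off.

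For (II), the paper avoids the $\Phi$-vector-field calculus entirely: it simply telescopes $\widehat{\varphi}_{i,\overline{p}}(p)-\widehat{\varphi}_{i,\overline{p}}(p')$ directly, applies the $\alpha$-H\"older estimate for $\sigma$ to each factor, and then bounds the resulting increments $|x-x'|$, $x'\|y-y'\|$, $x'^2\|z-z'\|$ (obtained via the reverse triangle inequality and the boundedness of $\|y-\overline{y}\|$, $(x+x')\|z-\overline{z}\|$ on the chart) by $d_{\infty,\Phi}(p,p')$. This is purely algebraic and sidesteps the issue you yourself flag---namely that passing from pointwise bounds on $x^{2}\partial_{x}F$, $x\partial_{y_j}F$, $\partial_{z_k}F$ to a global Lipschitz estimate $|F(p)-F(p')|\le L\,d_{\Phi}(p,p')$ requires an integration argument along a curve that might leave the support. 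Your interpolation-against-$L^{\infty}$ trick handles $\sigma\circ F$ but not $F$ itself, so in practice you would end up doing the same direct increment estimate anyway. The upside of your framework is that it is more systematic and scales naturally to higher derivatives, which is exactly how you propose to handle (III); the paper instead dispatches (III) in a single sentence, observing that $\sigma$ is identically $1$ near $0$, so the non-smoothness of $\|\cdot-\overline{y}\|$ and $\|\cdot-\overline{z}\|$ at the origin never enters.

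For (IV), the paper again argues more directly: rather than going through $\overline{p}$ via the triangle inequality, it bounds $d_{1,\Phi}(p,p')$ straight away using $x,x'\le\varepsilon$ together with the trivial chart bounds $\|y-y'\|\le 2\sqrt{b}$, $\|z-z'\|\le 2\sqrt{f}$. Your triangle-inequality argument is fine, but note that the reason $x^{2}\|y-\overline{y}\|^{2}$ and $x^{4}\|z-\overline{z}\|^{2}$ are $O(\varepsilon^{2})$ is precisely these chart bounds combined with $x\le\varepsilon$, not the fact that the arguments of $\sigma$ are $\le 1$ (for instance $\varepsilon x^{2}\|z-\overline{z}\|\le 1$ alone only gives $x^{4}\|z-\overline{z}\|^{2}\le \varepsilon^{-2}$).
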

\begin{proof}
Property I follows directly from the definition of $\sigma$.
\medskip

Clearly, the fact that $\widehat{\varphi}_{i,\overline{p}}$ and $\widehat{\psi}_{i,\overline{p}}$ lie in $C^{\alpha}_{\Phi}(M)$ is a direct consequence of II, due to $\widehat{\psi}_{i,\overline{p}}$ and $\widehat{\varphi}_{i,\overline{p}}$ being bounded.
Let us therefore prove II.
Since $\widehat{\psi}_{i,\overline{p}}$ is just a rescaling of $\widehat{\varphi}_{i,\overline{p}}$, it is enough to prove II for the function $\widehat{\varphi}_{i,\overline{p}}$.
\medskip

Let $p,p'\in A_i$ and assume $\phi_i^{-1}(p)=(x,y,z)$ while $\phi_i^{-1} (p')=(x',y',z')$.
We have the following chain of inequalities:
\begin{align*}
\widehat{\varphi}_{i,\overline{p}}(p)-\widehat{\varphi}_{i,\overline{p}}(p')=&\sigma\left(\frac{x}{\varepsilon}\right)\sigma(x\|y-\overline{y}\|)\sigma(\varepsilon x^2\|z-\overline{z}\|)\\
-&\sigma\left(\frac{x'}{\varepsilon}\right)\sigma(x'\|y'-\overline{y}\|)\sigma(\varepsilon x'^2\|z'-\overline{z}\|)\\
\le& C\varepsilon^{-\alpha}|x-x'|^\alpha+C\left(\sigma(x\|y-\overline{y}\|)-\sigma(x'\|y'-\overline{y}\|)\right)\\
+&C\left(\sigma(\varepsilon x^2\|z-\overline{z}\|)-\sigma(\varepsilon x'^2\|z'-\overline{z}\|)\right)\\
\le& C\varepsilon^{-\alpha}|x-x'|^\alpha+C\left((x-x')\|y-\overline{y}\|+x'\|y-\overline{y}\|-x'\|y'-\overline{y}\|\right)^\alpha\\
+&C\varepsilon^{\alpha}\left(x^2\|z-\overline{z}\|-x'^2\|z-\overline{z}\|+x'^2\|z-\overline{z}\|-x'^2\|z'-\overline{z}\|\right)^\alpha\\
\le& C\varepsilon^{-\alpha}|x-x'|^\alpha+C\left(|x-x'|\|y-\overline{y}\|\right)^\alpha+C\left(x'\|y-y'\|\right)^\alpha\\
+&C\varepsilon^{\alpha}\left(|x^2-x'^2|\|z-\overline{z}\|\right)^\alpha+C\varepsilon^{\alpha}\left(x'^2\|z-z'\|\right)^\alpha\\
\le&C\varepsilon^{-\alpha}|x-x'|^\alpha+C\varepsilon^{-\alpha}\left(|x-x'|\|y-\overline{y}\|\right)^\alpha+C\varepsilon^{-\alpha}\left(x'\|y-y'\|\right)^\alpha\\
+&C\varepsilon^{-\alpha}\left(|x-x'|(x+x')\|z-\overline{z}\|\right)^\alpha+C\varepsilon^{-\alpha}\left(x'^2\|z-z'\|\right)^\alpha\\
\le &C\varepsilon^{-\alpha}|x-x'|^\alpha+C\varepsilon^{-\alpha}\left(x'\|y-y'\|\right)^\alpha+C\varepsilon^{-\alpha}\left(x'^2\|z-z'\|\right)^\alpha\\
\le &C\varepsilon^{-\alpha}|x-x'|^\alpha+C\varepsilon^{-\alpha}\left(x'\|y-y'\|+x\|y-y'\|\right)^\alpha\\
+&C\varepsilon^{-\alpha}\left(x'^2\|z-z'\|+(2xx'+x^2)\|z-z'\|\right)^\alpha\\
\le& C\varepsilon^{-\alpha}\left(|x-x'|^\alpha+(x+x')^\alpha\|y-y'\|^\alpha+(x+x')^{2\alpha}\|z-z'\|^\alpha\right)\\
\le& C\varepsilon^{-\alpha}\di_{\infty,\Phi}(p,p')^\alpha \le C\di_{2,\Phi}(p,p')^\alpha.
\end{align*}
It is important to mention that the $C$'s in the above estimate represent (perhaps different) uniform constants.
Note that the third inequality is obtained by making use of the reverse triangle inequality and sublinearity of $x^\alpha$ (with $\alpha\in (0,1)$).
The fourth inequality follows from the inequalities $\varepsilon^\alpha\le 1\le \varepsilon^{-\alpha}$ while the fifth inequality is a direct consequence of $\|y-\overline{y}\|$, as well as $|x+x'|$ and $\|z-\overline{z}\|$, being bounded.
So far, we have seen $\widehat{\varphi}_{i,\overline{p}}$ and $\widehat{\psi}_{i,\overline{p}}$ to lie in $C^{\alpha}_{\Phi}(M)$. 
This result can be extended to $C^{2,\alpha}_{\Phi}(M)$ just by noticing $\sigma$ to be constant near $\overline{p}$.
\medskip

Finally, let us prove IV. 
Consider $p,p'\in\supp(\widehat{\varphi}_{i,\overline{p}})$ with $\phi_i^{-1}(p)=(x,y,z)$ and $\phi_i^{-1}(p')=(x',y',z')$.
From the definition of $\sigma$, it is known that $x,x' \in (0,\varepsilon]$. 
Thus, computing $\di_{1,\Phi}(p,p')$ we get
\begin{align*}
\di_{1,\Phi}(p,p')=&|x-x'|+(x+x')\|y-y'\|+(x+x')^2\|z-z'\|\\
&\le \varepsilon+4\sqrt{b}\varepsilon+4\sqrt{f}\varepsilon^2\le \overline{C}\varepsilon
\end{align*}
with $\overline{C}=\max\{1,4\sqrt{b},4\sqrt{f}\}$.
Notice that the values $2\sqrt{b}$ and $2\sqrt{f}$ come from the Euclidean length of the diagonal of the cubes $(-1,1)^b$ and $(-1,1)^f$ respectively. 
\end{proof}
\begin{rmk}
We want to point out that the function $\widehat{\varphi}_{i,\overline{p}}$ and $\widehat{\psi}_{i,\overline{p}}$ from Proposition \ref{PartitionOfUnityProp} are defined on the open sets $A_i$. 
Due to the nature of the function $\sigma$, it is possible to extended each of them to the entire manifold $\overline{M}$ by making them vanish outside their respective supports.  With a slight abuse of notation, we shall not distinguish $\widehat{\varphi}_{i,\overline{p}}$ and $\widehat{\psi}_{i,\overline{p}}$ from their extensions.

Moreover, it is worth pointing out that the functions $\widehat{\varphi}_{i,\overline{p}}$ and $\widehat{\psi}_{i,\overline{p}}$ are, in fact, far more regular than simply $C^2$.  Non the less, property III is stated only for $C^2$ due to extra negative powers of $\varepsilon$ appearing in estimating the $\alpha$-seminorm of the derivatives.
\end{rmk}
The functions $\widehat{\varphi}_{i,\overline{p}}$ and $\widehat{\psi}_{i,\overline{p}}$ will allow us to construct the claimed partitions of unity.
Recall that, for a partition of unity, only a finite number of functions may be non-vanishing in a neighborhood.
Although we have a finite family of open sets $(A_i)_i$, the functions $\widehat{\varphi}_{i,\overline{p}}$, $\widehat{\psi}_{i,\overline{p}}$ are defined for every point on the boundary $\partial\overline{M}$ of $\overline{M}$.
This makes virtually impossible to have only finitely many non-vanishing functions in neighborhoods of points in a collar neighborhood of the boundary.
Hence, the final step for the construction of partitions of unity is to "reduce" the amount of points $\overline{p}\in\partial\overline{M}$ by means of which we defined the bump functions $\widehat{\varphi}_{i,\overline{p}}$ and $\widehat{\psi}_{i,\overline{p}}$.
To this end, let us consider the following set: for a fixed $\vartheta\in (0,1)$ consider
$$E_{i,\vartheta}=A_i\cap \left\{\phi_i(0,\vartheta\Lambda)\,|\,\Lambda\in\mathbb{Z}^{b+f}\right\}.$$
Recall that $\phi_i:B(1)\rightarrow A_i$ is a diffeomorphism, thus the set $E_{i,\vartheta}$ consists of finitely many boundary points in $A_i$.
This especially means that the family of functions $(\widehat{\varphi}_{i,\overline{p}})_{i,\overline{p}\in E_{i,\vartheta}}$, as well as for the family $(\widehat{\psi}_{i,\overline{p}})_{i,\overline{p}\in E_{i,\vartheta}}$, are finite.
\begin{rmk}\label{SupportOfPartitionRMK}
By definition of $\sigma$ we can conclude that there exists an open neighborhood of the boundary $\partial \overline{M}$ of $\overline{M}$, contained in the collar neighborhood $U_{\mathfrak{R}}$, so that every point $q$ in such a neighborhood lies in the support of at most finitely many of the functions $(\widehat{\varphi}_{i,\overline{p}})_{i,\overline{p}\in E_{i,\vartheta}}$ and $(\widehat{\psi}_{i,\overline{p}})_{i,\overline{p}\in E_{i,\vartheta}}$.
\end{rmk}
The only thing left to get partitions of unity (on an open neighborhood of the boundary) is to let the families $(\widehat{\varphi}_{i,\overline{p}})$ and $(\widehat{\psi}_{i,\overline{p}})$ to sum up to $1$. 
This is achieved by a trivial "normalization", to this end, for some $\vartheta\in (0,1)$ and for $\overline{p}\in E_{i,\vartheta}$, we define the functions $\varphi_{i,\overline{p}}$ and $\psi_{i,\overline{p}}$ as follows:
\begin{equation}\label{PartitionDefSupp}
\varphi_{i,\overline{p}}(p):=\frac{\widehat{\varphi}_{i,\overline{p}}(p)}{\sum_j\sum_{\overline{p}\in E_{j,\vartheta}}\widehat{\varphi}_{j,\overline{p}}(p)}\;\;\mbox{ and }\:\:\psi_{i,\overline{p}}(p):=\frac{\widehat{\psi}_{i,\overline{p}}(p)}{\sum_j\sum_{\overline{p}\in E_{j,\vartheta}}\widehat{\psi}_{j,\overline{p}}(p)}.
\end{equation}
It is now clear that both families $(\varphi_{i,\overline{p}})_{i,\overline{p}\in E_{i,\vartheta}}$ and $(\psi_{i,\overline{p}})_{i,\overline{p}\in E_{i,\vartheta}}$ are partition of unity on open neighborhoods of $\partial\overline{M}$.
Furthermore, since \eqref{PartitionDefSupp} holds only for points contained in the support of some of the functions $\widehat{\varphi}_{i,\overline{p}}$ and $\widehat{\psi}_{i,\overline{p}}$, it follows that properties I to IV in Proposition \ref{PartitionOfUnityProp} hold for the families $(\varphi_{i,\overline{p}})_{i,\overline{p}}$ and $(\psi_{i,\overline{p}})$.
\begin{rmk}
Notice that the functions $\widehat{\varphi}_{i,\overline{p}}$ and $\widehat{\psi}_{i,\overline{p}}$ are defined in terms of some $\varepsilon\in (0,1)$.
Thus the families $(\varphi_{i,\overline{p}})_{i,\overline{p}}$ and $(\psi_{i,\overline{p}})_{i,\overline{p}}$ are partitions of unity for any choice of $\varepsilon$.
\end{rmk}
Finally, the function 
\begin{equation}\label{phisum}
\phi:=\sum_i\sum_{\overline{p}\in E_{i,\vartheta}}\varphi_{i,\overline{p}}
\end{equation}
is constantly equal to $1$ on an open neighborhood of $\partial\overline{M}$ and satisfies properties I to IV in Proposition \ref{PartitionOfUnityProp} as well.

\subsubsection{\textbf{Boundary Parametrix}}\label{BoundaryParametrixSec2}

The partitions of unity presented in \S \ref{PartitionSec} allow us to construct a boundary parametrix for heat-type operators $P$ (cf. \eqref{HeatTypeOp}).
\medskip

Let $\gamma\in\mathbb{R}$ and $\alpha\in (0,1)$ be fixed and consider $\ell\in x^\gamma C^{\alpha}_{\Phi}(M\times[0,T])$.
A parametrix for an heat-type operator $P$ is a map $\mathbf{Q}:x^\gamma C^{\alpha}_{\Phi}(M\times[0,T])\rightarrow x^\gamma C^{2,\alpha}_{\Phi}(M\times [0,T])$ so that $u=\mathbf{Q}\ell$ is a solution for the parabolic Cauchy problem
\begin{equation}\label{HeatTypeEq}
Pu=(\partial_t+a\Delta)u=\ell;\;\;u|_{t=0}=0.
\end{equation}
Our first step towards the construction of $\mathbf{Q}$ is establishing an operator $\mathcal{Q}_{B}:x^\gamma C^{\alpha}_{\Phi}(M\times[0,T])\rightarrow x^\gamma C^{2,\alpha}_{\Phi}(M\times [0,T])$ giving rise to approximate solutions of \eqref{HeatTypeEq} near the boundary (the notion of approximate solutions is in the spirit of Lemma \ref{LocalSolvabilityBoundaryLemma} below). 
Hence, in order to do this, we localize \eqref{HeatTypeEq} near the boundary.
Let us therefore fix some $\overline{p}\in\partial\overline{M}$. 
As pointed out in Remark \ref{SupportOfPartitionRMK} every point on the boundary lies in the support of at most finitely many of the functions defined in \eqref{PartitionDefSupp}. 
Thus, without loss of generality, we can assume $\overline{p}$ to lie in some $E_{i,\vartheta}$ for some $i$ and some $\vartheta\in (0,1)$, which, from now on, will be considered to be fixed.
Next we freeze the coefficient $a$ of the Laplace-Beltrami operator at $t=0$.
In particular we focus our attention to the parabolic Cauchy problem with constant coefficient
\begin{equation}\label{cauchyproblem}
    P(\overline{p},0)\overline{u}_{\overline{p}}:=(\partial_t+a(\overline{p},0)\Delta)\overline{u}_{\overline{p}}=\varphi_{i,\overline{p}}\ell,\;\;
    \overline{u}_{\overline{p}}|_{t=0}=0.
\end{equation}

\medskip
Note that the Cauchy problem \eqref{cauchyproblem} is formally different from the Cauchy problem in \eqref{HeatTypeEq}, not only due to the localization but especially because the coefficient $a$ of the Laplace-Beltrami operator is now constant.

\medskip
By assuming $a$ to be positive and bounded from below away from zero, it is clear that, upon rescaling, the heat-kernel operator of $a(\overline{p},0)\Delta$, denoted by $\mathbf{H}_{\gamma,\overline{p}}$, is the same as the one from \S \ref{heatkerelmap-subsect}.
It follows that a solution for \eqref{cauchyproblem} is given by $\mathbf{H}_{\gamma,\overline{p}}(\varphi_{i,\overline{p}}\ell)$.
In particular, by defining
\begin{equation}\label{ubarpbar}
u_{\overline{p}}=\mathcal{Q}_{\gamma,i,\overline{p}}(\ell):=\psi_{i,\overline{p}}\mathbf{H}_{\gamma,\overline{p}}(\varphi_{i,\overline{p}}\ell),
\end{equation}
we have the following:
\begin{lem}\label{LocalSolvabilityBoundaryLemma}
Let $\alpha<\beta\le 1$ and assume $a \in C^{\beta}_{\Phi}(M\times [0,T])$ to be positive and bounded from below away from zero. 
Then, for every $\ell\in C^{\alpha}_{\Phi}(M\times[0,T])$, the function $u_{\overline{p}}=\mathcal{Q}_{\gamma,i,\overline{p}}(\ell)$, defined in \eqref{ubarpbar}, satisfies
\begin{equation}\label{Error1}
    Pu_{\overline{p}}:=(\partial_t+a\Delta)u_{\overline{p}}=\varphi_{i,\overline{p}}\ell+ R_{i,\overline{p}}^1\ell+R^2_{i,\overline{p}}\ell
\end{equation}
where
\begin{itemize}
    \item [a)] $R^1_{i,\bar{p}}:x^\gamma C^{\alpha}_\Phi(M\times [0,T])\rightarrow x^\gamma C^{\alpha}_\Phi(M\times[0,T])$ is a bounded operator.
Moreover, if $T<1$  there exists some constant $C>0$ so that
    $$\|R^1_{i,\bar{p}}\ell\|_{\alpha}\le C\left(T^{\alpha/2}+\varepsilon^{\beta}\right)\varepsilon^{-\alpha}\|\ell\|_{\alpha}.$$
    \item[b)] $R^2_{i,\bar{p}}:x^\gamma C^{\alpha}_\Phi(M\times [0,T])\rightarrow x^\gamma C^{\alpha}_\Phi(M\times[0,T])$ is a bounded operator and its operator norm goes to $0$ as $T\rightarrow 0^+$ i.e.
    $$\lim_{T\rightarrow 0^+}\|R^2_{i,\bar{p}}\|_{\op}=0.$$
\end{itemize}
\end{lem}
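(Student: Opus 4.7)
The plan is to apply the product rule for $P = \partial_t + a\Delta$ to $u_{\overline{p}} = \psi_{i,\overline{p}} v$, where $v := \mathbf{H}_{\gamma,\overline{p}}(\varphi_{i,\overline{p}}\ell)$, and then to split the outcome into the genuine source $\varphi_{i,\overline{p}}\ell$ plus two controllable remainders: a \emph{frozen-coefficient} error measuring how far $a$ is from the constant $a(\overline{p},0)$ used to define $\mathbf{H}_{\gamma,\overline{p}}$, and a \emph{commutator} error coming from derivatives of the cutoff $\psi_{i,\overline{p}}$. Using the product rule (with the positive-Laplacian sign convention), one gets
$$P(\psi_{i,\overline{p}} v) = \psi_{i,\overline{p}}(\partial_t + a\Delta)v + a\,v\,\Delta\psi_{i,\overline{p}} - 2a\,\langle \nabla\psi_{i,\overline{p}}, \nabla v\rangle_{g_\Phi},$$
and then adding and subtracting $a(\overline{p},0)$ inside the first summand, together with the defining identity $(\partial_t + a(\overline{p},0)\Delta)v = \varphi_{i,\overline{p}}\ell$ and the fact $\psi_{i,\overline{p}}\varphi_{i,\overline{p}} = \varphi_{i,\overline{p}}$ (property I of Proposition \ref{PartitionOfUnityProp}), extracts the leading term and forces the definitions
$$R^1_{i,\overline{p}}\ell := \psi_{i,\overline{p}}\bigl(a - a(\overline{p},0)\bigr)\Delta v, \qquad R^2_{i,\overline{p}}\ell := a\,v\,\Delta\psi_{i,\overline{p}} - 2a\,\langle\nabla\psi_{i,\overline{p}}, \nabla v\rangle_{g_\Phi}.$$

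For $R^1$ I would exploit property IV of Proposition \ref{PartitionOfUnityProp}, which bounds the $\Phi$-diameter of $\supp\psi_{i,\overline{p}}$ by $\overline{C}\varepsilon$. The assumption $a \in C^\beta_\Phi$ then gives $\|a - a(\overline{p},0)\|_\infty \le C(\varepsilon^\beta + T^{\beta/2})$ on that support, plus a standard interpolation $[a - a(\overline{p},0)]_\alpha \le C(\varepsilon + \sqrt{T})^{\beta - \alpha}$ on sets of bounded $\Phi$-diameter. The mapping property $\mathbf{H}_{\gamma,\overline{p}} : x^\gamma C^\alpha_\Phi \to x^\gamma C^{2,\alpha}_\Phi$ of Theorem \ref{heatkernelmap} (which transfers to $a(\overline{p},0)\Delta$ by rescaling time through the positive constant $a(\overline{p},0)$) yields $\|\Delta v\|_\alpha \le C\|\ell\|_\alpha$. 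Combining these with the Hölder product rule and with $[\psi_{i,\overline{p}}]_\alpha \le C\varepsilon^{-\alpha}$ from property II, the bound $\|R^1_{i,\overline{p}}\ell\|_\alpha \le C(T^{\alpha/2} + \varepsilon^\beta)\varepsilon^{-\alpha}\|\ell\|_\alpha$ follows after absorbing the subleading contributions using $T<1$ and $\beta > \alpha$ (in particular $T^{\beta/2}\le T^{\alpha/2}$).

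The estimate for $R^2$ is precisely where Lemma \ref{TechincalLemma} applies. Since $\psi_{i,\overline{p}} \equiv 1$ on $\supp\varphi_{i,\overline{p}}$, both $\Delta\psi_{i,\overline{p}}$ and the $\Phi$-gradient $\nabla\psi_{i,\overline{p}}$ vanish on $\supp\varphi_{i,\overline{p}}$; hence the Schwartz kernels of the operators $\ell \mapsto \Delta\psi_{i,\overline{p}}\cdot \mathbf{H}_{\gamma,\overline{p}}(\varphi_{i,\overline{p}}\ell)$ and $\ell \mapsto \langle\nabla\psi_{i,\overline{p}}, \nabla \mathbf{H}_{\gamma,\overline{p}}(\varphi_{i,\overline{p}}\ell)\rangle_{g_\Phi}$ are off-diagonal in $M\times M$ and lift to compactly supported kernels on the heat space $M^2_h$ that stay away from the faces $\ff$, $\fd$, $\lf$, $\rf$, touching only $\td$ through the rapidly decaying profile used in the proof of Lemma \ref{TechincalLemma}. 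That argument carries over essentially verbatim and yields an $\mathcal{O}(\sqrt{T})$ operator-norm bound for each summand; boundedness of $a$ (which follows from $a \in C^\beta_\Phi$) contributes only a multiplicative constant, and one concludes $\|R^2_{i,\overline{p}}\|_{\op} \to 0$ as $T \to 0^+$.

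The main obstacle is the delicate accounting of the interplay between the small scale $\varepsilon$ and the time scale $T$ in $R^1$: the Hölder seminorm of $\psi_{i,\overline{p}}$ blows up like $\varepsilon^{-\alpha}$, and only the excess regularity $\beta - \alpha > 0$ of $a$ supplies the compensating $\varepsilon^\beta$ smallness on the $\varepsilon$-sized support, which is precisely why the hypothesis $\alpha < \beta$ is indispensable. A secondary technical point is that $\nabla\psi_{i,\overline{p}}$ must be read in the $\Phi$-calculus, i.e.\ expanded against the $\Phi$-frame $x^2\partial_x,\; x\partial_{y_j},\; \partial_{z_k}$, so that the off-diagonal analysis for $R^2$ invokes the $\Phi$-geometric heat-kernel estimates of \cite{paper1} rather than their Euclidean counterparts.
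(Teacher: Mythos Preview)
Your approach is correct and matches the paper's: the same product-rule decomposition, the same identification $R^1_{i,\overline{p}}\ell = \psi_{i,\overline{p}}(a-a(\overline{p},0))\Delta v$ and $R^2_{i,\overline{p}}\ell = [a\Delta,\psi_{i,\overline{p}}]v$, and the same tools (properties II and IV of Proposition~\ref{PartitionOfUnityProp} plus the mapping properties of Theorem~\ref{heatkernelmap} for $R^1$, Lemma~\ref{TechincalLemma} for $R^2$).

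Two minor points worth tightening. First, your intermediate bound $\|\Delta v\|_\alpha \le C\|\ell\|_\alpha$ drops a factor $\varepsilon^{-\alpha}$: the mapping property gives $\|\Delta v\|_\alpha \le C\|\varphi_{i,\overline{p}}\ell\|_\alpha$, and $[\varphi_{i,\overline{p}}]_\alpha \le C\varepsilon^{-\alpha}$ then produces $\|\varphi_{i,\overline{p}}\ell\|_\alpha \le C\varepsilon^{-\alpha}\|\ell\|_\alpha$. In the paper this is the term that actually forces the $\varepsilon^{-\alpha}$ in the final bound, so it is not merely cosmetic. Second, for $R^2$ the hypothesis of Lemma~\ref{TechincalLemma} that you need is not that $\nabla\psi_{i,\overline{p}}$ and $\varphi_{i,\overline{p}}$ have disjoint supports (that makes the kernel off-diagonal, which would in fact keep it away from $\td$ as well, contradicting your ``touching only $\td$''), but rather that $\nabla\psi_{i,\overline{p}}$ and $\Delta\psi_{i,\overline{p}}$ are supported \emph{away from $\partial\overline{M}$}; this holds because $\psi_{i,\overline{p}}$ is constant near the boundary by the construction of $\sigma$, and it is precisely what pushes the lifted kernel away from $\ff,\fd,\lf,\rf$.
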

\begin{proof}
In order to avoid the plethora of indices we will suppress all the indices on $\varphi,\psi$ and the error terms $R^0$ and $R^1$.
Following the same computations as in \cite[Lemma 4.3]{bahuaud2014yamabe} one gets
\begin{align}
Pu_{\overline{p}}=&\psi\partial_t\mathbf{H}_{\overline{p}}(\varphi \ell)+[a\Delta,\psi]\left(\mathbf{H}_{\overline{p}}(\varphi \ell)\right)+\psi a\Delta \left(\mathbf{H}_{\overline{p}}(\varphi \ell)\right)\nonumber\\
=&\psi(\partial_t+a\Delta)\left(\mathbf{H}_{\overline{p}}(\varphi \ell)\right)+[a\Delta,\psi]\left(\mathbf{H}_{\overline{p}}(\varphi \ell)\right)\nonumber\\
=&\psi\left(\partial_t+a(\overline{p},0)\right)\Delta)\left(\mathbf{H}_{\overline{p}}(\varphi \ell)\right)+\psi\left(a-a(\overline{p},0)\Delta\right)\left( \mathbf{H}_{\overline{p}}(\varphi \ell)\right)\label{BPCTDE}\\
&+[a\Delta,\psi]\left(\mathbf{H}_{\overline{p}}(\varphi \ell)\right)\nonumber\\
=&\psi \varphi \ell+\psi\left(a-a(\overline{p},0)\right)\Delta\left( \mathbf{H}_{\overline{p}}(\varphi\ell)\right)+[a\Delta,\psi]\left(\mathbf{H}_{\overline{p}}(\varphi \ell)\right)\nonumber\\
=&:\psi\varphi\ell+R^1\ell+R^2\ell=\varphi\ell +R^1\ell+R^2\ell;\nonumber
\end{align}
where $[a\Delta,\psi]$ denotes the commutator between the differential operators $a\Delta$ and the "multiplication by $\psi$" operator.
Note that the fourth equality in \eqref{BPCTDE} follows from  $\mathbf{H}_{\overline{p}}(\varphi f)$ being a solution of the localized Cauchy problem.
Moreover, the last equality is a consequence of property I in Proposition \ref{PartitionOfUnityProp}.

\medskip
We will estimate the norms of $R^1$ and $R^2$ with $\gamma=0$; the case for generic $\gamma$ is slightly more involved but it follows along the same lines.
Furthermore, the estimates will be performed on $\supp(\psi)$ since the $\alpha$-norm is not effected by such a change.
\medskip

Let us begin by estimating the $\alpha$-norm of the operator $R^1$ applied to the function $\ell$.
\begin{equation}\label{R1EstimateFirst}
\begin{split}
\|R^1\ell\|_{\alpha}=&\|R^1\ell\|_{\infty}+[R^1\ell]_{\alpha}\\
\le&\|\psi\|_{\infty}\|a-a(\overline{p},0)\|_{\infty}\|\Delta\mathbf{H}(\varphi\ell)\|_{\infty}\\
&+[\psi]_{\alpha}\|a-a(\overline{p},0)\|_{\infty}\|\Delta\mathbf{H}(\varphi\ell)\|_{\infty}\\
&+\|\psi\|_{\infty}[a-a(\overline{p},0)]_{\alpha}\|\Delta\mathbf{H}(\varphi\ell)\|_{\infty}\\
&+\|\psi\|_{\infty}\|a-a(\overline{p},0)\|_{\infty}[\Delta\mathbf{H}(\varphi\ell)]_{\alpha}.
\end{split}
\end{equation}
We will estimate each term in \eqref{R1EstimateFirst} separately.
In what follows, unless otherwise specified, we will denote all the uniform constants by $C$.

\medskip
We begin by estimating the first term in \eqref{R1EstimateFirst}.
By assumption $a\in C^{\beta}_{\Phi}(M\times[0,T])$ with $\beta>\alpha$.
Thus one deduces
\begin{equation}\label{SupNorma-a0}
\|a-a(\overline{p},0)\|_{\infty}\le C(\varepsilon^\beta+T^{\beta/2});
\end{equation}
for some constant $C>0$, due to property IV in Proposition \ref{PartitionOfUnityProp}. 
From Theorem \ref{SecondMappingPropertyTHM} one has boundedness of the operator
$\Delta\mathbf{H}:C^{\alpha}_{\Phi}(M\times[0,T])\rightarrow t^{\alpha/2}C^{0}_{\Phi}(M\times[0,T])$; thus resulting in the estimate
\begin{equation}\label{SupNormDeltaH}
\|\Delta\left(\mathbf{H}(\varphi\ell)\right)\|_\infty\le Ct^{\alpha/2}\|\varphi\ell\|_\infty\le CT^{\alpha/2}\|\ell\|_{\infty}\le CT^{\alpha/2}\|\ell\|_{\alpha}.
\end{equation}
Hence the first term in \eqref{R1EstimateFirst} can be estimated by
\begin{equation}\label{R1EstimateSup}
\|\psi\|_{\infty}\|a-a(\overline{p},0)\|_{\infty}\|\Delta\left(\mathbf{H}(\varphi\ell)\right)\|_{\infty}\le C(\varepsilon^\beta+T^{\beta/2})T^{\alpha/2}\|\ell\|_{\alpha}.
\end{equation}
For the second term in \eqref{R1EstimateFirst} we use property II in Proposition \ref{PartitionOfUnityProp} paired with \eqref{SupNorma-a0} and \eqref{SupNormDeltaH}, resulting in 
\begin{equation}\label{R1EstimateSecond}
[\psi]_{\alpha}\|a-a(\overline{p},0)\|_{\infty}\|\Delta\left(\mathbf{H}(\varphi\ell)\right)\|_{\infty}\le C\varepsilon^{-\alpha}T^{\alpha/2}(\varepsilon^{\beta}+T^{\beta/2}).
\end{equation}
The third term in \eqref{R1EstimateFirst} can be estimated by noticing the following.
Recall that we are estimating on the $\supp(\psi)$; thus, from property IV in Proposition \ref{PartitionOfUnityProp}, for very $p,p'$ lying in the support of $\psi$, $\di_{\Phi}(p,p')\le \overline{C}\varepsilon$.
By choosing $\varepsilon$ small enough, e.g. $\varepsilon\le 1/\overline{C}$, and $T<1$, which will be consistent for our future applications (cf. Proposition \ref{BoundariParametrixProp}), we have
$$\di_{\Phi}(p,p')^\beta+|t-t'|^{\beta/2}\le \di_{\Phi}(p,p')^\alpha+|t-t'|^{\alpha/2}.$$
This implies, due to the assumption $a\in C^{\beta}_{\Phi}(M\times[0,T])$ and thus $[a]_{\beta}\le C$, that $[a-a(\overline{p},0)]_{\alpha}=[a]_{\alpha}\le C$.
Therefore we find 
\begin{equation}\label{R1EstimateThird}
\|\psi\|_{\infty}[a-a(\overline{p},0)]_{\alpha}\|\Delta\left(\mathbf{H}(\varphi\ell)\right)\|_{\infty}\le CT^{\alpha/2}\|\ell\|_{\alpha}.
\end{equation}
Finally, in order to estimate the fourth, and last, term in \eqref{R1EstimateFirst} we use the mapping property discussed in Theorem \ref{MappingPropertiesTHM} to deduce $\Delta \mathbf{H}:C^\alpha_{\Phi}(M\times[0,T])\rightarrow C^\alpha_{\Phi}(M\times[0,T])$ to be bounded.
Thus
$$[\Delta\left(\mathbf{H}(\varphi\ell)\right)]_{\alpha}\le \|\Delta\left(\mathbf{H}(\varphi\ell)\right)\|_{\alpha}\le C\|\varphi\ell\|_{\alpha}\le C\varepsilon^{-\alpha}\|\ell\|_{\alpha}$$
which, in turn, implies
\begin{equation}\label{R1EstimateFourth}
\|\psi\|_{\infty}\|a-a(\overline{p},0)\|_{\infty}[\Delta\left(\mathbf{H}(\varphi\ell)\right)]_{\alpha}\le C(\varepsilon^{\beta}+T^{\beta/2})\varepsilon^{-\alpha}\|\ell\|_{\alpha}.
\end{equation}
Joining \eqref{R1EstimateSup}-\eqref{R1EstimateFourth} together, in view of \eqref{R1EstimateFirst}, we conclude
\begin{align*}
\|R^1\ell\|_{\alpha}\le&C\left(T^{\alpha/2}(\varepsilon^{\beta}+T^{\beta/2})+\varepsilon^{-\alpha}T^{\alpha/2}(\varepsilon^{\beta}+T^{\beta/2})+T^{\alpha/2}+\varepsilon^{-\alpha}(\varepsilon^{\beta}+T^{\beta/2})\right)\|\ell\|_{\alpha}\\
\le&C\left(T^{\alpha/2}+\varepsilon^{\beta}\right)\varepsilon^{-\alpha}\|\ell\|_{\alpha};
\end{align*}
where the $C$'s denote different uniform constants.
We want to point out that the estimate above holds due to $\varepsilon\le 1/\overline{C}<1$ and $0<\alpha<\beta\le 1$, concluding the first part of the statement.
\medskip

For the second part we argue as follows.
By making use of the product rule one sees that for every twice-differentiable function$w$,
$$[a\Delta,\psi]w=a\Delta(\psi)\cdot w-2a\;g_{\Phi}(\nabla\psi,\nabla w),$$
where $\nabla$ denotes the gradient.
Note that our choice of $\psi$ implies that all of its derivatives are vanishing near the boundary $\partial\overline{M}$.
Thus, by choosing $w=\mathbf{H}_{\overline{p}}(\varphi\ell)$, we see that the assumption of Lemma \ref{TechincalLemma} are satisfied. 
Hence, $R^2:C^{\alpha}_{\Phi}(M\times[0,T])\rightarrow C^{\alpha}_{\Phi}(M\times[0,T])$ is a bounded operator with operator norm converging to $0$ as $T\rightarrow 0^+$.
\end{proof}
\begin{rmk}
We want to point out the main difference between the result presented here and the analogous result for edge manifolds \cite[Lemma 4.3]{bahuaud2014yamabe}. 
In \cite{bahuaud2014yamabe} the authors use the Mean Value Theorem to estimate the supremum norm of the coefficient $a$ of the Laplace-Beltrami operator.
This leads to terms which can be estimate against the incomplete edge distance.
In particular they reach an estimate of the form 
$$\|a-a(\overline{p},0)\|_{\infty}\le C(\varepsilon+T^{\alpha/2})$$
for some positive constant $C$ (cf. \cite[page 21]{bahuaud2014yamabe}).  
In our case, an application of the Mean Value Theorem does not lead to something comparable with the $\Phi$-distance $\di_{\Phi}$.
Therefore, we could assume less regularity from a differentiability point of view.
But the assumption $a\in C^{\alpha}_{\Phi}(M\times[0,T])$ is not enough to guarantee the existence of a boundary parametrix (see Proposition \ref{BoundariParametrixProp}).
Indeed one can see that, by assuming $a\in C^{\alpha}_{\Phi}(M\times[0,T])$, the estimates performed in the proof of Lemma \ref{LocalSolvabilityBoundaryLemma} lead to 
$$\|R^1\ell\|_{\alpha}\le C(T^{\alpha/2}\varepsilon^{-\alpha}+1)$$
which, in turn, can not be made less than one thus making it impossible for $R^1$ to have small operator norm.
\end{rmk}

By means of the operators $\mathcal{Q}_{\gamma,i,\overline{p}}$ we define
$$\mathcal{Q}_{B}=\sum_{i}\sum_{\overline{p}\in E_{i,\vartheta}}\mathcal{Q}_{\gamma,i,\overline{p}},$$
so that, for a given function $\ell$ in $x^\gamma C^{\alpha}_{\Phi}(M\times[0,T])$, one has
\begin{equation}\label{DefnBdryParam}
\mathcal{Q}_B\ell=\sum_{i}\sum_{\overline{p}\in E_{i,\vartheta}}\psi_{i,\overline{p}}\mathbf{H}_{\gamma,\overline{p}}(\varphi_{i,\overline{p}}\ell).
\end{equation}
\begin{prop}\label{BoundariParametrixProp}
For every $0<\delta<1$ there exist $\varepsilon$ and $T$ positive and sufficiently small so that 
\begin{equation}\label{BoundaryParametrixMappingProp}
\begin{split}
&\mathcal{Q}_B:x^\gamma C^{\alpha}_\Phi(M\times[0,T])\rightarrow x^\gamma C^{2,\alpha}_\Phi(M\times[0,T]),\\
&\mathcal{Q}_B:x^\gamma C^{\alpha}_\Phi(M\times[0,T])\rightarrow x^\gamma \sqrt{t}C^{1,\alpha}_\Phi(M\times[0,T])
\end{split}
\end{equation}
are bounded operators.
Moreover, in terms of the function $\phi$ defined in \eqref{phisum} one has, for every $\ell\in x^\gamma C^{\alpha}_{\Phi}(M\times[0,T])$,
$$(\partial_t+a\Delta)(\mathcal{Q}_B\ell)=\phi \ell +R^1\ell+R^2\ell$$
with $\|R^1\|_{\op}\le\delta$ and $\|R^2\|_{\op}$ converging to $0$ as $T$ goes to $0$.
\end{prop}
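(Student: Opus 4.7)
The plan is to assemble $\mathcal{Q}_B$ from the local operators $\mathcal{Q}_{\gamma,i,\overline{p}}$ and transfer the pointwise control from Lemma \ref{LocalSolvabilityBoundaryLemma} to a global statement using the locally finite partition of unity built in \S \ref{PartitionSec}. Since $P = \partial_t + a\Delta$ is linear, applying it to \eqref{DefnBdryParam} and invoking \eqref{Error1} termwise gives
\begin{equation*}
P(\mathcal{Q}_B\ell) \;=\; \sum_{i}\sum_{\overline{p}\in E_{i,\vartheta}}\left(\varphi_{i,\overline{p}}\ell + R^1_{i,\overline{p}}\ell + R^2_{i,\overline{p}}\ell\right) \;=\; \phi\,\ell \;+\; R^1\ell \;+\; R^2\ell,
\end{equation*}
where the first sum collapses to $\phi$ by \eqref{phisum}, and $R^j := \sum_{i,\overline{p}} R^j_{i,\overline{p}}$ for $j=1,2$. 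It thus remains to verify the mapping properties \eqref{BoundaryParametrixMappingProp} and to control $\|R^1\|_{\op}$ and $\|R^2\|_{\op}$.

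For the mapping properties, I would factor each $\mathcal{Q}_{\gamma,i,\overline{p}}$ as multiplication by $\psi_{i,\overline{p}}\in C^{2,\alpha}_\Phi$ composed with $\mathbf{H}_{\gamma,\overline{p}}$ composed with multiplication by $\varphi_{i,\overline{p}}\in C^{2,\alpha}_\Phi$. Property III of Proposition \ref{PartitionOfUnityProp} makes multiplication bounded on each of the spaces involved, while Theorem \ref{MappingPropertiesTHM} supplies the two target regularities $x^\gamma C^{2,\alpha}_\Phi$ and $x^\gamma\sqrt{t}\,C^{1,\alpha}_\Phi$ for $\mathbf{H}_{\gamma,\overline{p}}$ (rescaling to absorb the positive constant $a(\overline{p},0)$ bounded away from zero). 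Local finiteness of the families $(\psi_{i,\overline{p}})$ and $(\varphi_{i,\overline{p}})$ near $\partial\overline{M}$, noted in Remark \ref{SupportOfPartitionRMK}, ensures that at each point only finitely many summands are non-zero, so the global operator norm is controlled by the per-patch norm multiplied by the uniform overlap constant.

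The main obstacle, and the reason both parameters $\varepsilon$ and $T$ must be tuned, is the estimate for $R^1$. Lemma \ref{LocalSolvabilityBoundaryLemma} gives $\|R^1_{i,\overline{p}}\ell\|_\alpha \le C(T^{\alpha/2}+\varepsilon^\beta)\,\varepsilon^{-\alpha}\|\ell\|_\alpha$, so each localized error has the form $(T^{\alpha/2}\varepsilon^{-\alpha}+\varepsilon^{\beta-\alpha})$. Because $\beta>\alpha$, I would first choose $\varepsilon$ small enough that $\varepsilon^{\beta-\alpha}$ is less than $\delta/(2N)$, where $N$ bounds the overlap multiplicity of the partition; then, with $\varepsilon$ fixed, choose $T$ small enough that $T^{\alpha/2}\varepsilon^{-\alpha} < \delta/(2N)$, also ensuring $T<1$ as required by the lemma. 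Summing over the locally finite partition (controlling both the supremum and the Hölder bracket by comparing any two points in the domain of finitely many overlapping cutoffs) then yields $\|R^1\|_{\op}\le \delta$.

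Finally, $R^2 = \sum_{i,\overline{p}}R^2_{i,\overline{p}}$ is handled identically, using part (b) of Lemma \ref{LocalSolvabilityBoundaryLemma}: each $\|R^2_{i,\overline{p}}\|_{\op}\to 0$ as $T\to 0^+$, and after fixing $\varepsilon$ the number of non-trivial summands in any overlapping cluster is uniformly bounded, so the local smallness transfers to the global norm $\|R^2\|_{\op}\to 0$. The subtle point throughout is that $\varepsilon$ must be chosen \emph{before} $T$, since tightening $\varepsilon$ degrades the factor $\varepsilon^{-\alpha}$ appearing with $T^{\alpha/2}$, but once $\varepsilon$ is fixed this is harmless.
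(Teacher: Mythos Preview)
Your proposal is correct and follows essentially the same approach as the paper: apply $P$ termwise to the locally finite sum \eqref{DefnBdryParam}, invoke Lemma \ref{LocalSolvabilityBoundaryLemma} to produce $\phi\ell + R^1\ell + R^2\ell$, use Theorem \ref{MappingPropertiesTHM} together with boundedness of multiplication by the cutoffs for the mapping properties, and then choose $\varepsilon$ (to make $\varepsilon^{\beta-\alpha}$ small) before $T$ (to make $T^{\alpha/2}\varepsilon^{-\alpha}$ small). The only cosmetic difference is that you make the overlap multiplicity $N$ explicit, whereas the paper absorbs it into the uniform constant $C$.
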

\begin{proof}
The mapping properties in \eqref{BoundaryParametrixMappingProp} are a straightforward consequence of the mapping properties of the heat-kernel operator $\mathbf{H}$ (cf. Theorem \ref{MappingPropertiesTHM}) and by noticing that multiplication by either $\psi_{i,\overline{p}}$ or $\varphi_{i,\overline{p}}$ are bounded operators, thus preserving the regularity.
\medskip

For the second part of the statement we begin by explicitly computing $(\partial_t+a\Delta)(\mathcal{Q}_{B}\ell)$.
Since the sum defining $\mathcal{Q}_B$ in \eqref{DefnBdryParam} is locally finite, by Lemma \ref{LocalSolvabilityBoundaryLemma} we conclude
\begin{align*}
(\partial_t+a\Delta)\mathcal{Q}_{B}\ell=&\sum_{i}\sum_{\overline{p}\in E_{i,\vartheta}}(\partial_t+a\Delta)\left(\psi_{i,\overline{p}}\mathbf{H}_{\gamma,\overline{p}}(\varphi_{i,\overline{p}}\ell)\right)\\
=&\phi\ell+\sum_{i}\sum_{\overline{p}\in E_{i,\vartheta}}R^1_{i,\overline{p}}\ell+\sum_{i}\sum_{\overline{p}\in E_{i,\vartheta}}R^2_{i,\overline{p}}\ell.
\end{align*}
For simplicity let us denote $R^j\ell=\sum_i\sum_{\overline{p}\in E_{i,\vartheta}}R^j_{i,\overline{p}}\ell$ for $j=1,2$.
Lemma \ref{LocalSolvabilityBoundaryLemma} gives 
$$\|R^1_{i,\bar{p}}\ell\|_{\alpha}\le C\left(T^{\alpha/2}+\varepsilon^{\beta}\right)\varepsilon^{-\alpha}\|\ell\|_{\alpha}.$$
Hence, by letting $\|\ell\|_{\alpha}\le 1$ we find that the operator norm of $R^1$ is bounded by
$$\|R^1\|_{\op}\le C\left(T^{\alpha/2}+\varepsilon^{\beta}\right)\varepsilon^{-\alpha}.$$
Again, the $C$'s denote different uniform constants.
For a given $0<\delta<1$ and $C$ as in the above estimate, it is possible to choose $0<T<1$ and $\varepsilon<\min\{1,1/\overline{C}\}$ sufficiently small so that 
$$T^{\alpha/2}\varepsilon^{-\alpha}+\varepsilon^{\beta-\alpha}<\frac{\delta}{C};$$
and $x=\varepsilon$ is a smooth hypersurface.
This might be accomplished, for instance, by choosing 
$$T^{\alpha/2}< \frac{\delta}{2C}\varepsilon^\alpha;\;\;\varepsilon^{\beta}<\frac{\delta}{2C}\varepsilon^{\alpha}.$$ 
In concerns of the operator norm of $R^2$, the estimate follows directly by employing Lemma \ref{LocalSolvabilityBoundaryLemma}.
\end{proof}

\subsection{Construction of the Parametrix}\label{InteriorParametrixSec}

In \S \ref{BoundaryParametrixSec2} we constructed an approximate boundary parametric for an heat-type operator $P$.
Here, we will first construct an approximate parametrix $\mathcal{Q}_{I}$ for $P$ in the interior $M$ of $\overline{M}$.
After obtaining $\mathcal{Q}_{I}$, we will see that a combination of $\mathcal{Q}_{B}$, as in \eqref{DefnBdryParam}, and $\mathcal{Q}_{I}$, defined below in \eqref{InteriorParametrixDef}, will lead to an approximate parametrix $\mathcal{Q}$ for $P$ on the whole $\overline{M}$.
As it is usual in Operator Theory, we will then get rid of the error, arising from $\mathcal{Q}$ being an approximate parametrix, via von Neumann series resulting in the claimed parametrix $\mathbf{Q}$ for $P$.
\medskip

Let $0<\delta<1$ be fixed and consider $\varepsilon$ and $T$ as in Proposition \ref{BoundariParametrixProp}.
From $\varepsilon$ being fixed, it follows that an $\varepsilon$-neighborhood of $\partial\overline{M}$ is also fixed and the function $\phi$ (defined in \eqref{phisum}) is identically $1$ on this neighborhood.
The idea now is to cut off a neighborhood of $\partial\overline{M}$ from $\overline{M}$.
Let $M_\varepsilon:=\{p\in \overline{M}\,|\,x(p)\ge \varepsilon/2\}$.
Clearly $M_\varepsilon$ is a compact manifold with boundary, meaning that we can consider its double space $\widehat{M}$.
Recall that the double space consists of two copies of $M_\varepsilon$ glued along the boundary and, for compact manifolds with boundary, it is a compact manifold without boundary. 
Note that the double space construction does not lead to a smooth metric on $\widehat{M}$. 
In order to smooth it up we consider a smoothing of such a metric so that the metric on $\widehat{M}$ and the one on $M$ coincide on $M_{2\varepsilon}$. 
Moreover, in dealing with $\widehat{M}$, we are working away from the boundary $\partial\overline{M}$ of $\overline{M}$.  Thus, the $\alpha$-H\"{o}lder spaces are exactly the classical ones.
\medskip

The function $(1-\phi)$ is defined on $M_\varepsilon$, but by setting it to be zero on the second copy of $M_\varepsilon$, we can extend it to a function, still denoted by $(1-\phi)$, on the double space $\widehat{M}$.
Hence $(1-\phi)$ defines, in particular, a smooth cut off function over $M_\varepsilon$ in $\widehat{M}$. 
Similarly, let $\overline{P}$ denote the uniform parabolic extension of $P|_{M_\varepsilon}$ to $\widehat{M}$.
From classical parabolic PDE theory, it is well known that there exists a parametrix $\overline{Q}_I$ for the heat operator $\overline{P}$ so that the maps
\begin{equation}\label{usual-holder-map}
\begin{split}
&\widehat{Q}_I:C^{k,\alpha}({\widehat{M}}\times [0,T])\rightarrow C^{k+2,\alpha}({\widehat{M}}\times[0,T]),\\
&\widehat{Q}_I:C^{k,\alpha}({\widehat{M}}\times [0,T])\rightarrow \sqrt{t}C^{k+1,\alpha}({\widehat{M}}\times[0,T]),
\end{split}
\end{equation}
are bounded.
The idea is to use such a parametrix $\widehat{Q}_I$ and the boundary parametrix constructed above to construct a parametrix $\mathcal{Q}$ for the Cauchy problem \eqref{CPE1}.
\medskip

Note that, for a given function $\widehat{u}\in C^{k,\alpha}(\widehat{M}\times[0,T])$, the second mapping property in \eqref{usual-holder-map} implies $\widehat{Q}_I\widehat{u}\in\sqrt{t}C^{k+1,\alpha}(\widehat{M}\times[0,T])$. 
In order to turn $\widehat{Q}_I\widehat{u}$ into a function in $C^{k,\alpha}_\Phi(M\times[0,T])$, let us consider a cut off function $\widehat{\Psi}$ on $\widehat{M}$ so that $\widehat{\Psi}=1$ on $\supp(1-\phi)$.
We can now define the operator 
\begin{equation}\label{InteriorParametrixDef}
\mathcal{Q}_I:=M(\widehat{\Psi})\circ \widehat{Q}_I \circ M(1-\phi),
\end{equation}
As pointed out in the proof of Proposition \ref{BoundariParametrixProp}, multiplication by $\widehat{\Psi}$ and $(1-\phi)$ preserve the regularity and are bounded operators.
Therefore the operator $\mathcal{Q}_I$
\begin{align*}
\mathcal{Q}_I&:x^\gamma C^{k,\alpha}_\Phi(M\times[0,T])\xrightarrow{{M(1-\phi)}}C^{k,\alpha}(\widehat{M}\times[0,T])\xrightarrow{\widehat{Q}_I}\\
&\xrightarrow{\widehat{Q}_I}\sqrt{t}C^{k+1,\alpha}(\widehat{M}\times[0,T])\xrightarrow{M(\overline{\Psi})}\sqrt{t}C^{k+1,\alpha}(M_\varepsilon \times[0,T])
\end{align*} 
acts continuously. 
Moreover, since we are working away from the boundary of $\overline{M}$, the spaces $C^{k+1,\alpha}(M_\varepsilon\times[0,T])$ can be identified with the space $x^\gamma C^{k+1,\alpha}_\Phi(M_\varepsilon\times[0,T])$. We can hence conclude that the operator $\mathcal{Q}_I$ mapping
$$\mathcal{Q}_I:x^\gamma C^{k,\alpha}_\Phi(M\times[0,T])\rightarrow x^\gamma\sqrt{t}C^{k+1,\alpha}_\Phi(M\times[0,T])$$
is bounded. 
We can therefore construct an approximate parametrix $\mathcal{Q}$ for the operator $P$ by setting
$$\mathcal{Q}\ell=\mathcal{Q}_B \ell+\mathcal{Q}_I \ell.$$
In particular, in view of the construction above and Proposition \ref{BoundariParametrixProp}, one sees that
\begin{equation}
\begin{split}
\mathcal{Q}&:x^\gamma C^{\alpha}_\Phi(M\times[0,T])\rightarrow x^\gamma C^{2,\alpha}_\Phi(M\times[0,T])\\
\mathcal{Q}&:x^\gamma C^{\alpha}_\Phi(M\times[0,T])\rightarrow x^\gamma\sqrt{t}C^{1,\alpha}_\Phi(M\times[0,T])
\end{split}
\end{equation}
are bounded.
\begin{prop}\label{RightInverseProp}
Let $0<\alpha<\beta\le 1$ and consider $a \in C^{k,\beta}_{\Phi}(M\times [0,T])$ to be positive and bounded from below away from zero.
There exists $T_0>0$ sufficiently small so that the operator $\mathbf{Q}$ acts continuously when mapping
\begin{align*}
&\mathbf{Q}:x^\gamma C^{\alpha}_\Phi(M\times[0,T_0])\rightarrow x^\gamma C^{2,\alpha}_\Phi(M\times[0,T_0]),\\
&\mathbf{Q}:x^\gamma C^{\alpha}_\Phi(M\times[0,T_0])\rightarrow x^\gamma\sqrt{t}C^{1,\alpha}_\Phi(M\times[0,T_0]).
\end{align*} 
Moreover, for every function $\ell$ in $x^\gamma C^\alpha_\Phi (M\times [0,T])$, $\mathbf{Q}\ell$ is a solution of the inhomogeneous Cauchy problem
\begin{equation}
(\partial_t+a\Delta)u=\ell, \;\; u|_{t=0}=0.
\end{equation}
\end{prop}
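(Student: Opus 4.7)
The plan is to apply the standard Neumann series trick: first compute $P(\mathcal{Q}\ell)$ and identify it as $\ell$ plus an error whose operator norm on $x^\gamma C^\alpha_\Phi(M\times[0,T_0])$ is strictly less than one, then invert $I$ plus that error by a Neumann series to obtain the true right-inverse $\mathbf{Q}$.

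For the boundary contribution, Proposition \ref{BoundariParametrixProp} directly supplies
\[
P(\mathcal{Q}_B \ell) = \phi\,\ell + R^1\ell + R^2 \ell,
\]
with $\|R^1\|_{\op} \le \delta$ (for a $\delta \in (0,1)$ to be chosen) and $\|R^2\|_{\op} \to 0$ as $T \to 0^+$. For the interior piece I compute via the Leibniz rule
\[
P(\mathcal{Q}_I \ell) = \widehat{\Psi}\, \overline{P}\bigl(\widehat{Q}_I((1-\phi)\ell)\bigr) + [a\Delta, \widehat{\Psi}]\,\widehat{Q}_I((1-\phi)\ell),
\]
using that the smoothed metric on $\widehat{M}$ agrees with $g_\Phi$ on $M_{2\varepsilon} \supset \supp(\widehat{\Psi})$, so $P$ and $\overline{P}$ coincide there. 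Since $\widehat{\Psi} \equiv 1$ on $\supp(1-\phi)$, the first summand collapses to $(1-\phi)\ell$, and I set $R^3 := [a\Delta, \widehat{\Psi}]\,\widehat{Q}_I \circ M(1-\phi)$. The commutator $[a\Delta, \widehat{\Psi}] = a(\Delta\widehat{\Psi}) - 2a\,g_\Phi(\nabla \widehat{\Psi}, \nabla\cdot)$ is a first-order operator whose coefficients are supported on $\supp(\nabla\widehat{\Psi})$, a set disjoint from $\supp(1-\phi)$. Combining the $\sqrt{t}$-gain recorded in \eqref{usual-holder-map} with an off-diagonal argument modeled on Lemma \ref{TechincalLemma}, I expect $\|R^3\|_{\op} \to 0$ as $T \to 0^+$.

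Adding the two computations gives $P(\mathcal{Q}\ell) = \ell + R\ell$ with $R = R^1 + R^2 + R^3$. I would fix $\delta = 1/4$ (thereby pinning down $\varepsilon$ and $\mathfrak{R}$), then shrink $T_0 > 0$ so that $\|R^2\|_{\op},\|R^3\|_{\op} \le 1/4$ on $[0,T_0]$, yielding $\|R\|_{\op} \le 3/4 < 1$. The Neumann series $(I+R)^{-1} = \sum_{k\ge 0}(-R)^k$ then defines a bounded operator on $x^\gamma C^\alpha_\Phi(M\times[0,T_0])$, and I set $\mathbf{Q} := \mathcal{Q}\circ(I+R)^{-1}$. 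By construction $P(\mathbf{Q}\ell) = (I+R)(I+R)^{-1}\ell = \ell$; the initial condition $\mathbf{Q}\ell|_{t=0}=0$ is inherited from the convolution structure of every heat-kernel piece entering $\mathcal{Q}_B$ and $\mathcal{Q}_I$; and the two claimed mapping properties follow by composing the bounded $(I+R)^{-1}: x^\gamma C^\alpha_\Phi \to x^\gamma C^\alpha_\Phi$ with the corresponding mapping properties already established for $\mathcal{Q}$.

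The chief technical obstacle will be the careful verification that $\|R^3\|_{\op} \to 0$. One must check that the commutator structure, together with the disjointness of $\supp(\nabla \widehat{\Psi})$ and $\supp(1-\phi)$, really does extract a genuine $\sqrt{t}$ factor from \eqref{usual-holder-map} uniformly in $\ell$; this is the interior analog of Lemma \ref{TechincalLemma} but phrased for the classical heat kernel on the double space $\widehat{M}$. Everything else is a routine assembly of previously established mapping properties together with the Neumann series.
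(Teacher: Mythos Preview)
Your proposal is correct and follows essentially the same route as the paper: compute $P(\mathcal{Q}\ell)=\ell+(R^1+R^2+R^3)\ell$ with $R^3=[a\Delta,\widehat{\Psi}]\widehat{Q}_I\circ M(1-\phi)$, argue that $\|R^2\|_{\op},\|R^3\|_{\op}\to 0$ as $T\to 0^+$ via the commutator/off-diagonal mechanism of Lemma~\ref{TechincalLemma}, then invert $\Id+R$ by a Neumann series and set $\mathbf{Q}=\mathcal{Q}(\Id+R)^{-1}$. Your explicit choice $\delta=1/4$ and your remarks on the initial condition and on the technical point for $R^3$ are minor elaborations, not a different strategy.
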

\begin{proof}
Let $\ell$ be a function in $x^\gamma C^{\alpha}_\Phi(M\times[0,T])$. 
By Proposition \ref{BoundariParametrixProp} and the construction above one computes
$$(\partial_t+a\Delta)(\mathcal{Q}\ell)=\phi \ell+R^1\ell+R^2\ell+(1-\phi)\ell+R^3\ell;$$
where $R^1$ and $R^2$ are the ones arising from Proposition \ref{BoundariParametrixProp} while $R^3$ is given by
$$R^3\ell=[a\Delta,\overline{\psi}]\left(\overline{Q}_I\left((1-\phi)\ell\right)\right).$$
Clearly $R^3:x^\gamma C^{\alpha}_\Phi(M\times[0,T])\rightarrow x^\gamma C^{\alpha}_\Phi(M\times[0,T])$ is bounded.
Furthermore, the operator norm of $R^3$ can be estimated in the same way as it has been done for $R^2$ in Lemma \ref{LocalSolvabilityBoundaryLemma}. 
In particular, it follows that both $\|R^2\|_{\op}$ and $\|R^3\|_{\op}$  converge to $0$ as $T$ goes to $0$, while $\|R^1\|_{\op}<\delta$. 
We can now find $T_0$ sufficiently small so that, for every $t\le\min{T_0,T}$, by denoting $R:=R^1+R^2+R^3$, 
$$\|R\|_{\op}\le\|R^1\|_{\op}+\|R^2\|_{\op}+\|R^3\|_{\op}< 1.$$
It is now clear that $\Id+R$ is invertible, with inverse obtained via Neumann series of $R$. 
The claimed right parametrix of $P$ will then be 
$$\mathbf{Q}=\mathcal{Q}(\Id+R)^{-1}.$$
\end{proof}
\begin{rmk}
In the above statement, $T_0$ arises from $\|R^2\|_{\op}$ and $\|R^3\|_{\op}$ converging to $0$ for $T\rightarrow 0^+$.
So, since $\|R^1\|_{\op}\le \delta$ we can fix $1-\delta$ and find $T_0$ so that $\|R^2\|_{\op}+\|R^3\|_{\op}<1-\delta$ for every $t\le T_0$.
\end{rmk}
\begin{cor} \label{Emap}
Let $a\in C^{\beta}_{\Phi}(M\times [0,T])$ be positive and bounded from below away from zero.
Then there exists $T_0$ sufficiently small (depending on $(\beta - \alpha)$), and a bounded operator
$$\mathbf{E}:x^\gamma C^{2,\alpha}_\Phi(M)\rightarrow x^\gamma C^{2,\alpha}_\Phi(M\times [0,T_0]),$$
so that, for every $u_0$ in $x^\gamma C^{2,\alpha}_\Phi(M)$, $u=\mathbf{E}u_0$ is a solution of the homogeneous Cauchy problem
\begin{equation}
(\partial_t+a\Delta)u=0,\;\;
u\big|_{t=0}=u_0.
\end{equation}
\end{cor}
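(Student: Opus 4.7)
The plan is to reduce the homogeneous Cauchy problem to the inhomogeneous one already solved by Proposition \ref{RightInverseProp}, by absorbing the initial condition into a constant-in-time extension and then correcting by a function vanishing at $t=0$. Specifically, given $u_0 \in x^\gamma C^{2,\alpha}_\Phi(M)$, I view $u_0$ as a time-independent element of $x^\gamma C^{2,\alpha}_\Phi(M\times [0,T_0])$. Since $\partial_t u_0=0$, we have $(\partial_t + a\Delta)u_0 = a\Delta u_0$, and I claim this source term lies in $x^\gamma C^{\alpha}_\Phi(M\times[0,T_0])$.

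The justification of this claim is the first technical step: because $\Delta$ is a second-order $\Phi$-differential operator (see \S \ref{PhiMfldsSect}), for any $V_1,V_2 \in \mathcal{V}_\Phi$ applied to $x^\gamma v$ with $v\in C^{2,\alpha}_\Phi(M)$, the product rule combined with the facts $x^2\partial_x(x^\gamma)=\gamma x^{\gamma+1}$, $x\partial_{y_i}(x^\gamma)=0$, $\partial_{z_j}(x^\gamma)=0$ shows that $\Delta(x^\gamma v) \in x^\gamma C^\alpha_\Phi(M)$, with norm controlled by $\|v\|_{2,\alpha}$. Then, since $a \in C^\beta_\Phi \subset C^\alpha_\Phi(M\times[0,T_0])$, multiplication by $a$ preserves $x^\gamma C^\alpha_\Phi$ by the standard Banach-algebra property of H\"older spaces, giving $\|a\Delta u_0\|_{\alpha,\gamma}\le C\|a\|_{\beta}\|u_0\|_{2,\alpha,\gamma}$.

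Now apply Proposition \ref{RightInverseProp} to the function $a\Delta u_0$: for $T_0$ sufficiently small (depending on $\beta-\alpha$, inherited from Proposition \ref{RightInverseProp}), the function $w := \mathbf{Q}(a\Delta u_0) \in x^\gamma C^{2,\alpha}_\Phi(M\times[0,T_0])$ satisfies $(\partial_t+a\Delta)w = a\Delta u_0$ with $w|_{t=0}=0$. I then define
\begin{equation*}
\mathbf{E}u_0 := u_0 - \mathbf{Q}(a\Delta u_0).
\end{equation*}
A direct check gives $\mathbf{E}u_0|_{t=0}=u_0$ and $(\partial_t+a\Delta)\mathbf{E}u_0 = a\Delta u_0 - a\Delta u_0 = 0$. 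Boundedness of $\mathbf{E}$ is immediate from the triangle inequality together with the bounds
\begin{equation*}
\|\mathbf{E}u_0\|_{2,\alpha,\gamma}\le \|u_0\|_{2,\alpha,\gamma}+\|\mathbf{Q}\|_{\op}\|a\Delta u_0\|_{\alpha,\gamma}\le C(1+\|a\|_\beta)\|u_0\|_{2,\alpha,\gamma}.
\end{equation*}

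Uniqueness of the solution (which is implicit in defining $\mathbf{E}$ as a well-defined operator) follows at once from Corollary \ref{UniquenessOfAny}, since any two solutions $u,\tilde u$ produce a difference $u-\tilde u \in C^{2,\alpha}_\Phi\subset C^{2,\alpha}$ solving $(\partial_t+a\Delta)(u-\tilde u)=0$ with zero initial data. The main point requiring care is the first step: verifying that $\Delta$ sends the weighted space $x^\gamma C^{2,\alpha}_\Phi$ into $x^\gamma C^\alpha_\Phi$ without spoiling the weight. Once this is in place, the remainder is bookkeeping around Proposition \ref{RightInverseProp} and the observation that a time-independent function inherits its spatial H\"older regularity on the product $M\times [0,T_0]$ with respect to the $\Phi$-parabolic distance used in \eqref{alphanorm}.
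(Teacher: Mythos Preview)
Your proposal is correct and follows essentially the same approach as the paper: both define $\mathbf{E}u_0 = u_0 - \mathbf{Q}(a\Delta u_0)$ after observing that $a\Delta u_0 \in x^\gamma C^{\alpha}_\Phi(M\times[0,T_0])$, and then check directly that this solves the homogeneous Cauchy problem. Your write-up simply supplies more detail than the paper (the weight-preservation of $\Delta$, the explicit boundedness estimate, and the uniqueness remark via Corollary~\ref{UniquenessOfAny}), all of which are left implicit there.
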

\begin{proof}
Since $u_0\in C^{2,\alpha}_\Phi(M)$, $a\Delta u_0$ lies in $C^{\alpha}_\Phi(M\times[0,T])$. 
Using the right parametrix for the inhomogeneous Cauchy problem constructed in Proposition \ref{RightInverseProp}, set 
$$\mathbf{E}u_0=u_0-\mathbf{Q}(a\Delta u_0).$$
An easy computation shows that $\mathbf{E}u_0$ solves the homogeneous Cauchy problem.
\end{proof}

Note that, unlike the statement of Theorem \ref{theorem2}, the last two results gives us a solution only on an interval $[0,T_{0}]$ which is possibly different from the initial interval $[0,T]$.  

\begin{proof}[\textbf{Proof of Theorem \ref{theorem2}}]
Consider a function $\ell \in x^{\gamma}C^{\alpha}_{\Phi}(M\times [0,T])$ and the Cauchy problem 
	\begin{equation}\label{abc}
	(\partial_{t}+a\Delta)u = \ell; \hspace{2mm} u|_{t=0} = 0,
	\end{equation}
From Proposition \ref{RightInverseProp}, we know that the Cauchy problem above admits a solution $u$ lying in $x^{\gamma}C^{2,\alpha}_{\Phi}(M\times [0,T_{0}])$.  
Clearly, if $T_0\ge T$ then the statement is true and there is nothing to prove.
Suppose, otherwise, that $T_0<T$.
We claim that the solution $u$ can be extend past $T_0$ meaning that we can find a $C^{2,\alpha}_{\Phi} (M\times[0,T])$ solution to \eqref{abc} which agrees with $u$ up to time $T_0$; therefore allowing us to find solutions defined on the whole time interval definition of the function $\ell$.
Let $\lambda\in (0,T_0)$ and consider the Cauchy problem
\begin{equation}\label{abc1}
(\partial_{t} + a\Delta)v_{1} = 0; \hspace{2mm} v_{1}|_{t=0} = u|_{t=T_{0}-\lambda},
\end{equation}
that is the homogeneous Cauchy problem with initial condition $u|_{t=T_0-\lambda}$.
From Corollary \ref{Emap} we know that \eqref{abc1} admits a solution, say $v_1$,  $v_1\in C^{2,\alpha}_\Phi(M\times [0,T_0])$ ($T_0$ is independent on the initial condition).
By performing a change of coordinates, i.e. $t\mapsto t+T_0-\lambda$ we can consider the function $v_1\in C^{2,\alpha}_\Phi(M\times[T_0-\lambda,2T_0-\lambda])$.

Similarly we can consider the "shifted" problem for \eqref{abc}. 
That is 
\begin{equation}\label{abc2}
(\partial_t+a\Delta)u_1=\ell(\_,t+T_0-\lambda),\,\,u_1|_{T_0-\lambda}=0.
\end{equation}
Again, by Proposition \ref{RightInverseProp}, a solution to \eqref{abc2}  $u_1\in x^\gamma C^{2,\alpha}_\Phi(M\times[T_0,2T_0-\lambda])$ exists.

Denote by $w$ the function $u_1+v_1$. 
Since $P$ is a linear operator we see that $w\in x^\gamma C^{2,\alpha}(M\times[T_0,2T_0-\lambda])$ satisfies
\begin{equation}\label{abc3}
(\partial_t+a\Delta)w=\ell(\_,t+T_0-\lambda),\,\,w|_{t=0}=u|_{T_0-\lambda}.
\end{equation}
It is now the time to point out that the function $u$ satisfies \eqref{abc3} in $[T_0-\lambda,T_0]$ as well.
Therefore, from Corollary \ref{UniquenessOfAny} we conclude that $u(\_,t)=w(\_,t)$ for every $t\in [T_0-\lambda,T_0]$.
This means that we can $C^2$-glue $u$ and $w$ giving rise to
\begin{equation*}
	\widetilde{u}(p,t) = 
	\begin{cases}
	u(p,t)& \mbox{ if } \hspace{2mm} 0\le t \le T_{0} \\
	w(p,t)& \mbox{ if } \hspace{2mm}T_{0}<t\le 2T_{0}-\lambda.
\end{cases}	
	\end{equation*}
Now, if $2T_{0}-\lambda \ge T$, the result is proved.
If not, repeat the process with $\widetilde{u}$ until $nT_{0} - n\lambda \ge T$ (which is possible in a finite number of repetitions since $[0,T]$ is compact).  Thus we have an extension of $u$ defined on $M\times [0,T]$. 
\medskip

Note that this extension was obtained employing the parametrix construction, i.e. the maps $\mathbf{Q}$ and $\mathbf{E}$.
Such maps are bounded, thus the extended map $\mathbf{Q}$ so that $\ell\mapsto \widetilde{u}$ is also bounded.
The proof of Corollary \ref{Emap} implies that the operator $\mathbf{E}$ can be extended as well, thus completing the proof.

\end{proof}

\section{Generalization of short-time existence}\label{ShortTimeExistenceSec}

\textcolor{red}{}

In \S \ref{ParametrixSection} we proved the existence of solutions for non-homogeneous Cauchy problems with vanishing initial condition (cf. Theorem \ref{theorem2}).
In the analysis of geometric flows, as the Yamabe flow or the Mean Curvature flow, one deals with quasi-linear heat-type Cauchy problems.
It is therefore useful to introduce some non-linearity in the heat-type Cauchy problems in the setting of $\Phi$-manifolds.
\medskip

For $0<\alpha<\beta\le 1$ and $a\in C^{k,\beta}_{\Phi}(M\times[0,T])$ as in the assumptions of Theorem \ref{theorem2}.
We are interested in Cauchy problems of the form 
\begin{equation}\label{cauchyshort}
(\partial_{t} + a\Delta)u = F(u), \;\; u|_{t=0}=0,
\end{equation}
with the operator $F$ subject to some restrictions. 
We have already seen something like this, namely Theorem \ref{cor-paper1}; indeed under the assumption $a=1$ and $F$ satisfying
\begin{enumerate}
\item $F:x^{\gamma}C^{k+2,\alpha}_{\Phi}(M\times [0,T])\rightarrow C^{k,\alpha}_{\Phi}(M\times [0,T])$;
\item $F$ can be written as a sum $F = F_{1} + F_{2}$ with
\begin{itemize}
\item[(i)]$F_{1}:x^{\gamma}C^{k+2,\alpha}_{\Phi}\rightarrow x^{\gamma}C^{k+1,\alpha}_{\Phi}(M\times [0,T]),$
\item[(ii)]$F_{2}:x^{\gamma}C^{k+2,\alpha}_{\Phi}\rightarrow x^{\gamma}C^{k,\alpha}_{\Phi}(M\times [0,T]);$
\end{itemize}
\item For $u,u' \in x^{\gamma}C^{k+2,\alpha}_{\Phi}(M\times [0,T])$ with $\|\cdot\|_{k+2,\alpha,\gamma}$-norm bounded from above by some $\eta>0$, i.e. $\|u\|_{k+2,\alpha,\gamma},\|u'\|_{k+2,\alpha,\gamma} \le \eta$,
there exists some $C_{\eta}>0$ such that
\begin{itemize}
\item[(i)] $\|F_{1}(u) - F_{1}(u')\|_{k+1,\alpha,\gamma} \le C_{\eta}\|u-u'\|_{k+2,\alpha,\gamma}$, $\|F_{1}(u)\|_{k+1,\alpha,\gamma} \le C_{\eta}\|u\|_{k+2,\gamma,\alpha},$
\item[(ii)] $\|F_{2}(u) - F_{2}(u')\|_{k,\alpha,\gamma} \le C_{\eta}\max\{\|u\|_{k+2,\alpha,\gamma},\|u'\|_{k+2,\alpha,\gamma}\}\|u-u'\|_{k+2,\alpha,\gamma}$, \newline $\|F_{2}(u)\|_{k,\alpha,\gamma} \le C_{\eta}\|u\|^{2}_{k+2,\alpha,\gamma},$
\end{itemize}
\end{enumerate}
Theorem \ref{cor-paper1} guarantees existence and uniqueness of solution to the Cauchy problem aforementioned.  It should be noted, on the other hand, that the proof for such result (c.f. \cite[pg. 30-31]{paper1}) uses only the mapping properties of the heat-kernel operator $\mathbf{H}$ that hold for the parametrix $\mathbf{Q}$.  
Therefore, one can naturally extend the result to the parametrix constructed in \S \ref{ParametrixSection}, providing a proof for our last main result that is Corollary \ref{theorem4}.
\begin{rmk}
Contrarily to the same statement for the nonlinear heat equation with constant coefficient, we can not provide higher regularity, that is a solution $u^*$ existing in $C^{k+2,\alpha}_{\Phi}(M\times [0,T'])$ for some $T'$ small enough.
This is fairly reasonable and it should attainable.
Unfortunately the estimates in the error term $R^1$ in Lemma \ref{LocalSolvabilityBoundaryLemma} do not seem to extend easily to higher regularity, due to some problems arising in the estimate of the sup-norm of the coefficient $a$ in case $a\in C^{k,\beta}_{\Phi}(M\times[0,T])$.
\end{rmk}

As mentioned at the beginning of this section the operator $F$ will allow us to deal with some non-linear heat-type Cauchy problems.
We want to conclude this work by explaining in a bit more details why this is the case.

A generic quasi-linear second order parabolic Cauchy problem on $M$ if of the form
\begin{equation}\label{quasilinparabolic}
\partial_t u=Lu,\;\;u|_{t=0}=u_0
\end{equation}
for some suitable function $u_0$ where $Lu=a^{ij}(p,t,u,\nabla u)D_{ij}u+b(p,t,u,\nabla u)$ with $D_{ij}$ being a second order partial differential operator. 
(Note that in order to have parabolicity, one needs that the Frech\'{e}t derivative of $L$ is indeed an elliptic operator with eigenvalues bounded away from zero).
In order to conclude short time existence of solutions to \eqref{quasilinparabolic} one usually argues by means of perturbations; that is, if we stay "close" to the the initial condition $u_0$ we may find some evolution of $u_0$ in terms of the equation in \eqref{quasilinparabolic} for short time.
This is equivalent to consider $u=u_0+v$ and derive an equation for $v$ from $\partial_t u=Lu$.
This will lead to a new Cauchy problem of the form 
\begin{equation}\label{LinearizedCauchy}
\partial_t v=L_0 v,\;\;v|_{t=0}=0.
\end{equation}
Now the operator $L_0$ is some sort of linearization of the operator $L$.
As one can expect, the operator $L_0$ might not be of the form $L_0=a\Delta-F$ with $F$ satisfying the conditions $(1),(2)$ and $(3)$ in the hypothesis of Corollary \ref{theorem4}.
That really depends on the quasi-linear operator $L$ at hand.
Therefore a unique treatment for every quasi-linear parabolic operators is impossible.
Finally, we want to point out that a linearization of the form $a\Delta+F$ with $F$ satisfying the three condition in Corollary \ref{theorem4} is expect for most of the geometric flows.
Indeed in such a case one deals with quasi-linear evolution operators containing, as higher order derivative term, a "time-dependent" Laplacian (see e.g. Mean Curvature flow) or some power of $u$ multiplying a (fixed-in time) Laplacian (e.g. Yamabe flow).

\bibliographystyle{amsalpha-lmp}

\end{document}